\setlist[enumerate]{label={\rm(\roman*)},topsep=0pt}
\newcommand\FK{\mathcal{F}_K}
\newcommand\SK{\mathcal{S}_K}
\newcommand\MK{\mathcal{M}_K}
\newcommand\MFK{\mathcal{MF}_K}
\newcommand\PK{\mathcal{P}_K}
\newcommand\sse{\subseteq}
\DeclareMathOperator{\Mat}{Mat}
\DeclareMathOperator{\diag}{diag}
\DeclareMathOperator{\GL}{GL}
\DeclareMathOperator{\SL}{SL}
\DeclareMathOperator{\Stab}{Stab}
\DeclareMathOperator{\SO}{SO}
\DeclareMathOperator{\SP}{Sp}
\numberwithin{equation}{section}
\newtheorem{thm}[equation]{Theorem}
\newtheorem{lem}[equation]{Lemma}
\newtheorem{cor}[equation]{Corollary}
\newtheorem{prop}[equation]{Proposition}
\theoremstyle{definition}
\newtheorem{defn}[equation]{Definition}
\newtheorem{exmp}[equation]{Example}    
\theoremstyle{remark}
\newtheorem{rem}[equation]{Remark} 
\newtheorem{rems}[equation]{Remarks} 
\subjclass[2010]{20G15 (14L24)}
\keywords{reductive algebraic groups, $G$-complete reducibility, closed $G$-orbits}
\title{On relative complete reducibility}
\author{Christopher Attenborough, Michael Bate, Maike Gruchot, Alastair Litterick, Gerhard R\"{o}hrle}
\address
{Christopher Attenborough: Department of Mathematics,
University of York,
York YO10 5DD,
United Kingdom}
\email{cea522@york.ac.uk}
\address
{Michael Bate: Department of Mathematics,
University of York,
York YO10 5DD,
United Kingdom}
\email{michael.bate@york.ac.uk}
\address
{Maike Gruchot: Fakult\"at f\"ur Mathematik,
	Ruhr-Universit\"at Bochum,
	D-44780 Bochum, Germany}
\email{maike.gruchot@rub.de}
\curraddr 
{Lehrstuhl D f\"{u}r Mathematik, RWTH Aachen University, 
	D-52062 Aachen, Germany}
\email{maike.gruchot@rwth-aachen.de}
\address
{Alastair Litterick: Fakult\"at f\"ur Mathematik, 
Ruhr-Universit\"at Bochum, 
D-44780 Bochum, Germany, \and
Fakult\"at f\"ur Mathematik, 
Universit\"at Bielefeld, 
D-33501 Bielefeld, Germany
}
\email{alastair.litterick@rub.de}
\curraddr
{Department of Mathematical Sciences, University of Essex, Wivenhoe Park, Colchester, Essex CO4 3SQ, United Kingdom}
\email{ajlitterick@essex.ac.uk}
\address
{Gerhard R\"ohrle: Fakult\"at f\"ur Mathematik,
Ruhr-Universit\"at Bochum,
D-44780 Bochum, Germany}
\email{gerhard.roehrle@rub.de}
\thanks{The first author is supported by an EPSRC Doctoral Training award. The fourth author is supported by the Alexander von Humboldt Foundation.}
\begin{document}

\begin{abstract}
Let $K$ be a reductive subgroup of a reductive group $G$ over an algebraically closed field $k$. The notion of relative complete reducibility, introduced in \cite{BMRT:relative}, gives a purely algebraic description of the closed $K$-orbits in $G^n$, where $K$ acts by simultaneous conjugation on $n$-tuples of elements from $G$. This extends work of Richardson and is also a natural generalization of Serre's notion of $G$-complete reducibility. In this paper we revisit this idea, giving a characterization of relative $G$-complete reducibility which directly generalizes equivalent formulations of $G$-complete reducibility. If the ambient group $G$ is a general linear group, this characterization yields representation-theoretic criteria. Along the way, we extend and generalize several results from \cite{BMRT:relative}.
\end{abstract}

\maketitle
\section{Introduction}
\label{sec:intro}

Let $G$ be a (possibly non-connected) reductive linear algebraic group and let $n\in \mathbb{N}$. The group $G$ acts by simultaneous conjugation on the $n$-fold Cartesian product $G^n$.
In his seminal work \cite[Thm.~16.4]{rich}, Richardson characterized the closed $G$-orbits in $G^n$
in terms of the subgroup structure of $G$.  
In \cite[Thm.~3.1]{BMR} Richardson's characterization 
was shown to be equivalent to a notion of Serre arising from representation theory, \cite{serre2}, 
and these ideas were further extended in \cite{BMRT:relative} to give a characterization of the closed $K$-orbits in $G^n$ for
an arbitrary reductive subgroup $K$ of $G$.
This gave rise to the notion of relative complete reducibility, which we briefly recall now (see Section
\ref{sec:Preliminaries} for full definitions).

Let $H$ be a subgroup of $G$ and let $K$ be a reductive subgroup of $G$.
Recall that (when $G$ is connected) the parabolic subgroups of $G$ have the form $P_\lambda$
where $\lambda$ is a cocharacter of $G$. 
Following \cite{BMRT:relative}, we say that $H$ is
\emph{relatively $G$-completely reducible with respect to $K$} if
for every cocharacter $\lambda$ of $K$
such that $H$ is contained in the subgroup $P_\lambda$ of $G$,  
there exists a cocharacter $\mu$ of $K$  such that
$P_\lambda = P_\mu$ and $H \subseteq L_\mu$,
a Levi subgroup of $P_\lambda$.
For $K = G$, this definition coincides with the usual notion of
$G$-complete reducibility due to Serre, cf.~\cite{BMR},  \cite{serre2}.

The following algebraic characterization of the closed
$K$-orbits in $G^n$ in terms of relative $G$-complete reducibility
was given in \cite[Thm.~1.1]{BMRT:relative}:

\begin{thm}
\label{thm:richardson}
Let $K$ be a reductive subgroup of $G$.
Let $H$ be the algebraic subgroup of $G$ generated by elements
$x_1,\ldots,x_n \in G$.
Then
$K\cdot(x_1, \ldots,x_n)$ is closed in $G^n$
if and only if
$H$ is relatively $G$-completely reducible with respect to $K$.
\end{thm}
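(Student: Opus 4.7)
The plan is to deduce this theorem from the Hilbert--Mumford--Kempf--Rousseau criterion for closed orbits applied to the reductive group $K$ acting on the affine variety $G^n$ by simultaneous conjugation. That criterion will tell us that the orbit $K\cdot(x_1,\ldots,x_n)$ is closed if and only if, for every cocharacter $\lambda$ of $K$ for which $\lim_{t\to 0}\lambda(t)\cdot(x_1,\ldots,x_n)$ exists in $G^n$, the limit lies in $K\cdot(x_1,\ldots,x_n)$. Regarding $\lambda$ as a cocharacter of $G$ via $K\subseteq G$, the standard dictionary identifies the existence of the limit $\lim_{t\to 0}\lambda(t)x_i\lambda(t)^{-1}$ in $G$ with the condition $x_i\in P_\lambda$; when it exists, the limit equals $c_\lambda(x_i)$, the image of $x_i$ under the canonical projection $c_\lambda\colon P_\lambda\to L_\lambda$ with kernel $R_u(P_\lambda)$. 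Hence the tuple's limit exists precisely when every $x_i\in P_\lambda$, equivalently when $H\subseteq P_\lambda$.

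For the direction asserting that a closed $K$-orbit implies relative complete reducibility, I would proceed as follows. Let $\lambda$ be a cocharacter of $K$ with $H\subseteq P_\lambda$, and set $v=(x_1,\ldots,x_n)$, $v'=(c_\lambda(x_1),\ldots,c_\lambda(x_n))$. Then $v'$ is $\lambda$-fixed and belongs to $K\cdot v$ by closedness, so $v'=k\cdot v$ for some $k\in K$. A Kempf--Rousseau-type refinement of Hilbert--Mumford permits choosing $k\in P_\lambda\cap K$. Setting $\mu(t):=k^{-1}\lambda(t)k$, which is again a cocharacter of $K$, the calculation $\mu(t)\cdot v=k^{-1}\lambda(t)k\cdot v=k^{-1}\lambda(t)\cdot v'=k^{-1}\cdot v'=v$ (using $\lambda(t)\cdot v'=v'$) shows $\mu$ fixes $v$, whence each $x_i$ commutes with the image of $\mu$, giving $H\subseteq C_G(\mu(\mathbb{G}_m))=L_\mu$. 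Moreover $k\in P_\lambda=N_G(P_\lambda)$, so $P_\mu=k^{-1}P_\lambda k=P_\lambda$, and $\mu$ witnesses relative $G$-complete reducibility for the given $\lambda$.

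For the converse, I would argue by contradiction: assume $H$ is relatively $G$-completely reducible with respect to $K$ and that $K\cdot v$ is not closed, and derive a contradiction. Hilbert--Mumford produces a cocharacter $\lambda$ of $K$ for which $v':=\lim\lambda(t)\cdot v$ exists but lies outside $K\cdot v$. Since $H\subseteq P_\lambda$, by hypothesis there is $\mu$ of $K$ with $P_\mu=P_\lambda$ and $H\subseteq L_\mu$; this $\mu$ fixes $v$. Since $L_\mu$ and $L_\lambda$ are Levi subgroups of the common parabolic $P_\lambda$, there is $g\in R_u(P_\lambda)$ with $gL_\mu g^{-1}=L_\lambda$, and, since $g\in\ker c_\lambda$, the restriction $c_\lambda|_{L_\mu}$ coincides with conjugation by $g$; hence $v'=g\cdot v$ in $G^n$. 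The crucial step will be to find such an intertwiner inside $K$: since $L_\mu\cap K$ and $L_\lambda\cap K$ are Levi subgroups of $P_\lambda\cap K$, they are $R_u(P_\lambda\cap K)$-conjugate, and I would argue that this $K$-level conjugation suffices to produce $u\in K$ with $u\cdot v=v'$, contradicting $v'\notin K\cdot v$.

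The principal technical obstacle is the compatibility of $K$- and $G$-Levi subgroups in both directions: in the forward direction, the Kempf-style refinement yielding $k\in P_\lambda\cap K$; in the converse, the promotion of the $R_u(P_\lambda\cap K)$-conjugation of $K$-Levi subgroups to the required identification at the $G$-level. Both rest on structural properties of the reductive pair $(K,G)$---notably that $P_\lambda\cap K$ is a parabolic subgroup of $K$ with Levi decomposition $(R_u(P_\lambda)\cap K)\rtimes(L_\lambda\cap K)$---and I expect them to be handled via structural lemmas extending the techniques of \cite{BMR} to the relative setting, along the lines developed in \cite{BMRT:relative}.
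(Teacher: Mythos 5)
Your proof is correct, and it follows essentially the same route as the original proof in \cite[Thm.~3.10 and Thm.~1.1]{BMRT:relative}; the present paper only quotes Theorem~\ref{thm:richardson} from that reference and does not reprove it. Two small remarks on the points you flag as ``technical obstacles.'' In the forward direction, the standard refinement gives more than $k\in P_\lambda\cap K$: when the $K$-orbit is closed and $v'=\lim_{t\to 0}\lambda(t)\cdot v$ lies in $K\cdot v$, one may take $k\in R_u(P_\lambda(K))$ (this is the destabilizing-cocharacter/unipotent-radical refinement of Hilbert--Mumford for the reductive group $K$ acting on $G^n$); this immediately gives $k\in P_\lambda$, so $P_\mu=k^{-1}P_\lambda k=P_\lambda$ as you want. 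In the converse direction, the lift from the $K$-level conjugation to the $G$-level is exactly the content of Lemma~\ref{lem:3.3}(i) in this paper (quoted from \cite[Lem.~3.3]{BMRT:relative}): if $u\in R_u(P_\lambda(K))$ satisfies $uL_\mu(K)u^{-1}=L_\lambda(K)$, then $uL_\mu u^{-1}=L_\lambda$. Combining this with the simple transitivity of $R_u(P_\lambda)$ on the R-Levi subgroups of $P_\lambda$ \cite[Lem.~6.8]{BMR} shows that $u$ coincides with your element $g$, so $v'=u\cdot v$ with $u\in K$, giving the desired contradiction. So the argument is sound; only this last identification $u=g$ (via uniqueness of the conjugating element in $R_u(P_\lambda)$) deserves to be stated explicitly.
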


Recall that in the particular case $K = G$ and $G = \GL(V)$, a subgroup $H$ of $G$ is $G$-completely reducible if and only if $V$ is a completely reducible $H$-module. This latter property can be defined equivalently by either (i) every $H$-stable flag of subspaces of $V$ admits an $H$-stable complement; (ii) every $H$-stable subspace of $V$ admits an $H$-stable complement; or (iii) $V$ is a direct sum of irreducible $H$-submodules. Our main result, Theorem~\ref{thm:minflags-general} below, generalizes these equivalences firstly to other connected reductive algebraic groups $G$, and also to the relative setting. When $G = \GL(V)$ this characterizes relative $\GL(V)$-complete reducibility (with respect to $K$) in terms of certain flags of submodules of $V$.

In order to state our results, we need some notation. Recall that an R-parabolic subgroup of $G$ is a subgroup of the form $P_{\lambda}$ for a cocharacter $\lambda$ of $G$ (see Section~\ref{sec:Preliminaries}). Let $\mathcal{P}$ be the poset of R-parabolic subgroups of $G$ under inclusion. For a reductive subgroup $K$ of $G$, write $\PK$ for the set of R-parabolic subgroups $P_{\lambda}$ with $\lambda \in Y(K)$. Then $\PK$ is also a poset under inclusion; write $\MK$ for its maximal elements. If $K = G$ is connected then $\PK$ is the poset dual to the spherical building of $G$, and $\MK$ is the set of maximal (proper) parabolic subgroups of $G$; in this case all members of $\MK$ have the same rank (i.e.\ height in the poset $\mathcal{P}$). This need not hold in general, cf.~Example \ref{ex:ConexStabX}.

Conjugation gives a natural action of $G$ and its subgroups on $\mathcal{P}$. Recall that two R-parabolic subgroups are called \emph{opposite} if their intersection is an R-Levi subgroup of $G$. It follows easily from the definitions that a subgroup $H$ of $G$ is relatively $G$-completely reducible with respect to $K$ precisely when each member of $\PK$ containing $H$ has an opposite in $\PK$ containing $H$. The following is our main result.
\begin{thm} \label{thm:minflags-general}
	Let $K \le G$ be reductive algebraic groups with $G$ connected, and let $H$ be a subgroup of $G$. Then the following are equivalent:
	\begin{enumerate}
		\item $H$ is relatively $G$-completely reducible with respect to $K$. \label{minflags-general-i}
		\item Each member of $\MK$ containing $H$ has an opposite in $\MK$ containing $H$. \label{minflags-general-ii}
		\item There is an R-Levi subgroup $L_{\mu}$ with $\mu \in Y(K)$, such that $H \le L_{\mu}$ and $H$ is relatively $L_{\mu}$-irreducible with respect to $K \cap L_{\mu}$. \label{minflags-general-iii}
	\end{enumerate}
\end{thm}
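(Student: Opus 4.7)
My plan is to establish the cycle (i) $\Rightarrow$ (iii) $\Rightarrow$ (i) together with (i) $\Leftrightarrow$ (ii). Throughout I rely on the reformulation noted just before the theorem, which I call (i$'$): $H$ is relatively $G$-completely reducible with respect to $K$ iff every $P \in \PK$ containing $H$ has an opposite in $\PK$ containing $H$. I also invoke an expected ``relative descent'' principle: if $\mu \in Y(K)$ and $H \le L_\mu$, then $H$ is relatively $G$-cr with respect to $K$ iff $H$ is relatively $L_\mu$-cr with respect to $K \cap L_\mu$. I expect this to appear in the paper's preliminaries, generalising its counterpart from \cite{BMRT:relative}.

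For (i) $\Rightarrow$ (iii), I would choose $P = P_\mu \in \PK$ of minimum dimension among those containing $H$. By (i$'$), an opposite $P'$ in $\PK$ contains $H$, so $H \le P \cap P' = L_\mu$. To show $H$ is relatively $L_\mu$-irreducible with respect to $K \cap L_\mu$, suppose toward contradiction that $H \le P_\eta^{L_\mu} \subsetneq L_\mu$ for some $\eta \in Y(K \cap L_\mu)$. Since $\mu(k^{*}) \subseteq Z(L_\mu)$ and $\eta(k^{*}) \subseteq L_\mu$, the cocharacters $\mu$ and $\eta$ commute, so $\tau := N\mu + \eta$ lies in $Y(K)$ for $N \gg 0$. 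Standard cocharacter arithmetic then gives $P_\tau^G \cap L_\mu = P_\eta^{L_\mu}$ and $P_\tau^G \subsetneq P_\mu^G = P$, yielding $H \le P_\tau^G \in \PK$ and contradicting the minimality of $P$. The reverse (iii) $\Rightarrow$ (i) is immediate: relative $L_\mu$-irreducibility trivially implies relative $L_\mu$-complete reducibility, whence the descent principle delivers (i).

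For (i) $\Rightarrow$ (ii), take $M \in \MK$ containing $H$ and any opposite $M' \in \PK$ containing $H$ supplied by (i$'$). Every opposite of $M = P_\mu$ in $G$ has the form $P_{u \cdot (-\mu)}$ for some $u \in R_u(M)$, and $M' \in \PK$ forces $u \cdot \mu \in Y(K)$; after replacing $\mu$ by $u \cdot \mu$ (which leaves $M = P_\mu$ unchanged but moves $M'$ onto the canonical opposite), I may assume $M' = P_{-\mu}$. If now $M' \subsetneq N$ for some $N = P_\nu \in \PK$, the identity $P_{-\mu} \subsetneq P_\nu \Leftrightarrow P_\mu \subsetneq P_{-\nu}$ (obtained via the substitution $\alpha \mapsto -\alpha$ in the root inclusion) gives $M \subsetneq P_{-\nu} \in \PK$, contradicting $M \in \MK$; hence $M' \in \MK$. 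The reverse (ii) $\Rightarrow$ (i), which I regard as the main obstacle, I would attack by deducing (iii). Take an R-Levi $L_\mu$ ($\mu \in Y(K)$) of minimum dimension containing $H$. Were $H \le P_\eta^{L_\mu} \subsetneq L_\mu$ for some $\eta \in Y(K \cap L_\mu)$, the cocharacter $\tau = N\mu + \eta$ would produce $P_\tau^G \in \PK$ containing $H$ with $L_\tau^G = L_\eta^{L_\mu} \subsetneq L_\mu$; extending $P_\tau^G$ to some $M_1 \in \MK$ and invoking (ii) yields $M_2 \in \MK$ opposite to $M_1$ with $H \le L_1 := M_1 \cap M_2$. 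The hard part is to arrange $L_1 \subsetneq L_\mu$ to contradict minimality; I expect this to require a delicate choice of $M_1$---e.g.\ ensuring $M_1 \subseteq P_\mu^G$ whenever feasible, so that $L_1 \subseteq P_\mu^G \cap P_{-\mu}^G = L_\mu$---coupled with the cocharacter geometry internal to $L_\mu$, perhaps via induction on $\dim G$.
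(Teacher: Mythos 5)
Your treatment of (i) $\Leftrightarrow$ (iii) and of (i) $\Rightarrow$ (ii) is essentially sound and in fact closely shadows what the paper does: the equivalence (i) $\Leftrightarrow$ (iii) is exactly \cite[Prop.~3.17(ii)]{BMRT:relative}, and your implication (i) $\Rightarrow$ (ii) (that an opposite in $\PK$ of a member of $\MK$ must again lie in $\MK$) is the content of Lemma~\ref{lem:correct-sets}. None of this uses connectedness, matching the paper's own remark.

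The genuine gap is in (ii) $\Rightarrow$ (i), and you have identified it yourself without resolving it. Your strategy --- take $L_\mu$ of minimal dimension containing $H$, aim for relative $L_\mu$-irreducibility, and contradict minimality by passing through some $M_1 \in \MK$ and its opposite $M_2$ supplied by (ii) --- founders exactly where you say it does: (ii) gives no control over which opposite $M_2$ you receive, so there is no mechanism to force $M_1 \cap M_2 \subsetneq L_\mu$. More fundamentally, your sketch never engages with the structural fact that makes the implication go through, which is that for $G$ connected \emph{every} member of $\PK$ is the intersection of the members of $\MK$ containing it (Lemma~\ref{lem:intersection}\ref{lem:intersection-ii}). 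This requires a genuine argument: one fixes a Borel $B \le P_\lambda$, works with the sets $I_\xi = \{\alpha \in \Delta : \langle\alpha,\xi\rangle = 0\}$, and shows that a non-maximal $P_\lambda \in \PK$ sitting below a maximal parabolic $Q = P_\mu$ of $G$ can be enlarged inside $Q$ to a member of $\PK$ by forming $\chi = n_1\lambda - n_2\nu$ for carefully chosen positive integers. With that in hand, the paper isolates the remaining work as Theorem~\ref{thm:condition-pk-prime}: an induction on the number $m$ of members of $\MK$ needed to realize $P \in \PK'$ as an intersection, using Lemmas~\ref{lem:IntersectionParabolic} and~\ref{lem:3.3} to peel off one maximal piece $P_{\lambda_1}$ at a time, conjugate $H$ into $L_{\lambda_1}$, and recombine the opposite of the $(m-1)$-fold intersection with $P_{-\lambda_1}$ inside a common maximal torus. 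Neither ingredient (the intersection lemma nor the induction) appears in your sketch, and without an analogue of them the implication (ii) $\Rightarrow$ (i) is out of reach.
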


\begin{rems} \leavevmode
	\begin{enumerate}
	\item Conditions \ref{minflags-general-i}--\ref{minflags-general-iii} specialize to the representation-theoretic notions discussed above if we take $K = G = \GL(V)$.
	\item A result of Serre \cite[Thm.~2.2]{serre2} states that when $K = G$ is a connected reductive group, a subgroup is $G$-completely reducible precisely when it lies in a Levi subgroup of each \emph{maximal} parabolic subgroup containing it. This result is central to the study of complete reducibility via the spherical building of $G$, as outlined in \emph{loc.\ cit}. The equivalence of \ref{minflags-general-i} and \ref{minflags-general-ii} in Theorem~\ref{thm:minflags-general} can be recast as a relative analogue of this building-theoretical result, generalizing Serre's result.
	\item In the case $G = \GL(V)$, Theorem~\ref{thm:minflags-general} provides very concrete criteria for relative complete reducibility. The general form of these criteria is given in Corollary~\ref{cor:minflags-glv} below, and is illustrated with explicit examples in Corollary~\ref{cor:classicalK} (when $K = \SO(V)$ or $\SP(V)$) and in Appendix~\ref{sec:appendix} (when $K$ is simple of type $G_{2}$). 
	\end{enumerate}
\end{rems}

The proof of Theorem~\ref{thm:minflags-general} makes use of the fact that $G$ is connected. However, the implications \ref{minflags-general-i} $\Leftrightarrow$ \ref{minflags-general-iii} $\Rightarrow$ \ref{minflags-general-ii} all hold without this assumption. For the missing implication, essentially the only issue is that in a non-connected group, not every R-parabolic subgroup in $\PK$ need arise as the intersection of members of $\MK$ (see Example~\ref{ex:intersection}). However, this does hold when $G$ is connected (Lemma~\ref{lem:intersection}). Theorem~\ref{thm:minflags-general} is therefore a consequence of our next result. For this, define the set
	\[ \PK' := \left\{ P \in \PK \mid P = \bigcap \{Q \in \mathcal{M}_{K} \mid P \le Q\} \right\}. \]

\begin{thm} \label{thm:condition-pk-prime}
	Let $K \le G$ be reductive algebraic groups, and let $H$ be a subgroup of $G$.
	Then each member of $\MK$ containing $H$ has an opposite in $\MK$ containing $H$ if and only if each member of $\PK'$ containing $H$ has an opposite in $\PK'$ containing $H$.
\end{thm}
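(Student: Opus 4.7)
The plan is to prove both implications separately, starting from the observation that $\MK \sse \PK'$: for $P \in \MK$, the only member of $\MK$ containing $P$ is $P$ itself, so $P = \bigcap\{P\} \in \PK'$.

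For the backward implication, take $P \in \MK$ with $H \le P$. Then $P \in \PK'$, so by hypothesis there is an opposite $R \in \PK'$ of $P$ with $H \le R$. I plan to show $R \in \MK$ by a cocharacter apartment argument: writing $R = P_\nu$ for a suitable $\nu \in Y(K)$ chosen so that $L_\nu = P \cap R$ (whence $P = P_{-\nu}$), a strict extension $R \lneq P_\alpha$ with $P_\alpha \in \PK$ and $\alpha \in Y(K)$ would, via the order-preserving correspondence $\beta \mapsto -\beta$ on a common apartment containing both $\nu$ and $\alpha$, yield $P = P_{-\nu} \lneq P_{-\alpha}$, contradicting the maximality of $P$.

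For the forward implication, let $P \in \PK'$ with $H \le P$, so $P = \bigcap \mathcal{M}_P$ where $\mathcal{M}_P = \{M \in \MK : P \le M\}$, and each $M$ contains $H$. By hypothesis, each $M$ admits an opposite $M^\ast \in \MK$ containing $H$, equivalently yielding $\nu_M \in Y(K)$ with $P_{\nu_M} = M$ and $H \le L_{\nu_M}$. My plan is to construct a single $\mu \in Y(K)$ with $P_\mu = P$ and $H \le L_\mu$ by forming a positive linear combination $\mu = \sum_M c_M \nu_M$ once the $\nu_M$ have been placed in a common maximal torus of $K$. Then $P_{-\mu} \in \PK$ contains $H$ and is opposite to $P$, and the identity $P_{-\mu} = \bigcap_M P_{-\nu_M}$ (each factor being in $\MK$ by the same apartment argument as in the backward direction) exhibits $P_{-\mu} \in \PK'$.

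The main obstacle is the compatibilization step: the cocharacters $\nu_M$ live a priori in different maximal tori of $K$, so their linear combination is not directly defined. I plan to resolve this by simultaneously conjugating them into a common maximal torus of a reductive subgroup of $K$ containing $H$ (for instance, working inside an appropriate Levi of $K$), while preserving the essential condition $H \le L_{\nu_M}$; alternatively, I would invoke the cocharacter manipulation techniques already developed in \cite{BMRT:relative}.
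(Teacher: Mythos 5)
Your backward implication is correct and matches the paper's: the observation $\MK \sse \PK'$ together with the apartment-style argument that an opposite of a maximal member of $\PK$ is again maximal is exactly the content of the paper's Lemma~\ref{lem:correct-sets}, which is invoked at this point in the paper's proof.

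The forward implication has the right skeleton (write $P = \bigcap_M M$ over $M \in \MK$, obtain an opposite $M^* \in \MK$ containing $H$ for each $M$, then combine cocharacters), but the ``compatibilization step'' you yourself flag as the main obstacle is the entire substance of the proof, and your sketch does not resolve it. There are two concrete problems. First, the theorem makes no assumption that $H \le K$, so there need be no ``reductive subgroup of $K$ containing $H$'' at all; the relevant group is $C_K(H)$. Second, and more seriously, even after correcting to $C_K(H)$: to move each $\nu_M$ into a fixed torus $S$ you must conjugate it by some $g_M$ which simultaneously lies in $C_K(H)$ (so $H \le L_{g_M\cdot\nu_M}$ is preserved) and normalizes $M$, equivalently lies in $M$ (so $P_{g_M\cdot\nu_M}=M$ is preserved). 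The group $C_K(H) \cap M$ that these constraints force $g_M$ into has no reason to carry $\nu_M(k^{\ast})$ onto a prescribed torus $S$ simultaneously for all $M$; the tori $\nu_M(k^{\ast})$ need not even be conjugate under $C_K(H)$. The paper avoids this by inducting on the number $m$ of maximal members in the intersection: at each step it \emph{replaces $H$ by an $R_u(P_{\lambda_1}(K))$-conjugate} so that $H \le L_{\lambda_1}$ holds outright (harmless since $R_u(P_{\lambda_1}(K)) \le R_u(P_{\lambda}(K))$), applies the induction hypothesis to $P_\nu = \bigcap_{i\ge 2} P_{\lambda_i}$, and then only has to conjugate a single cocharacter $\tau$ inside $C_{P_\nu(K)}(H)$ so that it commutes with $\lambda_1$. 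Reducing to two cocharacters at a time and moving $H$ itself are the ideas missing from your proposal; waving at ``the cocharacter manipulation techniques in \cite{BMRT:relative}'' does not supply them.
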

As a corollary, the conclusion of Theorem~\ref{thm:minflags-general} also holds for (possibly non-connected) reductive groups $G$ such that $\PK = \PK'$.

In the particular case $G = \GL(V)$, Theorem~\ref{thm:minflags-general} gives a representation-theoretic characterization of relative complete reducibility, as follows. Recall that a parabolic subgroup of $\GL(V)$ is the stabilizer of a flag of subspaces in $V$. The poset of flags in $V$ is the dual of the poset of parabolic subgroups in $\GL(V)$, i.e., we set $f\preceq f^\prime$ provided $\Stab_G(f)\supseteq\Stab_G(f^\prime)$. For $K$ a reductive subgroup of $\GL(V)$, we denote by $\mathcal{F}_K$ the set of flags in $V$ which stem from $K$, i.e., which correspond to parabolic subgroups $P_\lambda$ for $\lambda$ a cocharacter of $K$. A flag $f$ in $\FK$ is called \textit{minimal in $\FK$} provided $f^\prime\preceq f$ for $f^\prime$ in $\FK$ implies $f^\prime = f$. Let $\MFK\subseteq \mathcal{F}_K$ be the set of minimal flags in $\mathcal{F}_K$. Again, Example \ref{ex:ConexStabX} shows that members of $\MFK$ may have varying lengths. Of course, $\mathcal{MF}_{\GL(V)}$ is the set of flags of length $1$ in $V$, corresponding to the set of maximal parabolic subgroups in $\GL(V)$.
\begin{cor}
	\label{cor:minflags-glv}
	Let $H$ and $K$ be closed subgroups of $\GL(V)$ with $K$ reductive.
	Then the following are equivalent:
	\begin{enumerate}
		\item $H$ is relatively $\GL(V)$-completely reducible with respect to $K$.
		\item Every $H$-stable flag in $\MFK$ has an $H$-stable opposite in $\MFK$.
		\item There is a maximal torus $S$ of $C_{K}(H)$ such that $H$ preserves the direct-sum decomposition of $V$ into simultaneous $S$-eigenspaces, and this decomposition gives a flag which is maximal among $H$-stable flags in $\FK$.
		 \label{minflags-glv-iii}
	\end{enumerate}
\end{cor}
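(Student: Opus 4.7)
The plan is to specialize Theorem~\ref{thm:minflags-general} to $G = \GL(V)$ via the standard dictionary between cocharacters of $\GL(V)$ and weighted direct-sum decompositions of $V$. For $\lambda \in Y(\GL(V))$, the $\lambda$-weight decomposition $V = \bigoplus V_i$ has joint stabilizer $L_\lambda$, and ordering the summands by $\lambda$-weight yields a flag $f_\lambda$ with $\Stab_{\GL(V)}(f_\lambda) = P_\lambda$. This sets up an order-reversing bijection $\PK \leftrightarrow \FK$ that restricts to $\MK \leftrightarrow \MFK$, and under which opposite members of $\PK$ correspond to flags obtained from a common decomposition ordered in opposite directions, both stabilized by $L_\lambda$. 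The equivalence of \textit{(i)} and \textit{(ii)} in the corollary then transcribes Theorem~\ref{thm:minflags-general}\textit{(i)}$\Leftrightarrow$\textit{(ii)} directly.

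For \textit{(iii)}$\Rightarrow$\textit{(i)}, given a maximal torus $S$ of $C_K(H)$ whose $S$-flag is maximal in $\FK$ among $H$-stable flags, I pick a cocharacter $\mu \in Y(S) \subseteq Y(K)$ in general position, so that $L_\mu = C_{\GL(V)}(S)$ is the joint stabilizer of the $S$-weight decomposition and $H \le L_\mu$. By Theorem~\ref{thm:minflags-general}\textit{(iii)}$\Rightarrow$\textit{(i)} applied in $\GL(V)$, it suffices to prove $H$ is relatively $L_\mu$-irreducible with respect to $K \cap L_\mu$. Suppose instead that $H \le P_\nu^{L_\mu} \subsetneq L_\mu$ for some $\nu \in Y(K \cap L_\mu)$; since $\nu$ takes values in $L_\mu = C_{\GL(V)}(S)$ it commutes with $\mu$, so $\lambda := N\mu + \nu \in Y(K)$ for each integer $N$. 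A standard computation on root weights gives, for $N$ sufficiently large, $P_\lambda \subsetneq P_\mu$ and $P_\lambda \cap L_\mu = P_\nu^{L_\mu}$; hence $H \le P_\lambda$ and $f_\lambda \in \FK$ properly refines $f_\mu$, contradicting maximality.

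For \textit{(i)}$\Rightarrow$\textit{(iii)}, Theorem~\ref{thm:minflags-general}\textit{(iii)} supplies $\mu_0 \in Y(K)$ with $H \le L_{\mu_0}$ relatively $L_{\mu_0}$-irreducible with respect to $K \cap L_{\mu_0}$. The image $\mu_0(k^*)$ lies in $C_K(H)^0$; I extend it to a maximal torus $S$ of $C_K(H)^0$, and then $S \le L_{\mu_0}$ since $S$ commutes with $\mu_0(k^*)$. For generic $\mu \in Y(S) \subseteq Y(K \cap L_{\mu_0})$, the R-parabolic $P_\mu^{L_{\mu_0}}$ has Levi $L_\mu = C_{\GL(V)}(S)$ which contains $H$; relative $L_{\mu_0}$-irreducibility then forces $P_\mu^{L_{\mu_0}} = L_{\mu_0}$, so $\mu(k^*) \le Z(L_{\mu_0})^0$. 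Varying $\mu$ gives $S \le Z(L_{\mu_0})^0$, and hence $L_\mu = C_{\GL(V)}(S) \supseteq L_{\mu_0}$; combined with $L_\mu \subseteq L_{\mu_0}$ this gives $L_\mu = L_{\mu_0}$. The main obstacle is then the maximality of the $S$-flag $f_\mu$: a hypothetical refinement $P_\lambda \subsetneq P_\mu$ with $\lambda \in Y(K)$ and $H \le P_\lambda$ would, via relative complete reducibility, produce $\lambda' \in Y(K)$ with $P_{\lambda'} = P_\lambda$ and $H \le L_{\lambda'}$, giving an R-Levi from $Y(K)$ containing $H$ of strictly smaller dimension than $L_{\mu_0}$. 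I plan to exclude this via the expected relative analogue of the classical uniqueness (up to $C_K(H)^0$-conjugacy) of the relatively irreducible containing Levi, in the spirit of the methods of \cite{BMRT:relative}, which forces any R-Levi from $Y(K)$ containing $H$ to have dimension at least $\dim L_{\mu_0}$.
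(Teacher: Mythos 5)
Your overall strategy — translating Theorem~\ref{thm:minflags-general} through the standard order-reversing dictionary between cocharacters of $\GL(V)$, weighted direct-sum decompositions, and flags — is exactly the intended route, and the paper treats the corollary as precisely such a translation. Your transcription of \ref{minflags-general-i}$\Leftrightarrow$\ref{minflags-general-ii} and your argument for (iii)$\Rightarrow$(i) (picking a generic $\mu \in Y(S)$, then using the perturbation $\lambda = N\mu + \nu$ to contradict maximality of $f_{\mu}$ if $H$ were not relatively $L_{\mu}$-irreducible) are both correct.

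The gap is in the final step of (i)$\Rightarrow$(iii). You correctly reduce to showing that a hypothetical $\lambda \in Y(K)$ with $H \le P_{\lambda} \subsetneq P_{\mu}$ would, via relative complete reducibility and Lemma~\ref{lem:3.3}\ref{lem:3.3-ii}, produce an R-Levi $L_{\lambda'}$ with $\lambda' \in Y(K)$, $H \le L_{\lambda'}$ and $\dim L_{\lambda'} < \dim L_{\mu_0}$. But you then try to rule this out by appealing to an ``expected relative analogue'' of the conjugacy of minimal irreducible Levi subgroups, which you do not prove and which the present paper nowhere states. That appeal is unnecessary: \cite[Prop.~3.17(ii)]{BMRT:relative}, which the paper cites for \ref{minflags-general-i}$\Leftrightarrow$\ref{minflags-general-iii}, is stated for an $L$ that is \emph{minimal} among the $L_{\lambda}$ with $\lambda \in Y(K)$ containing $H$ (compare Proposition~\ref{prop:rational-317}\ref{rational-317-ii} in the paper). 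So simply \emph{choose} $\mu_0 \in Y(K)$ so that $L_{\mu_0}$ has minimal dimension among such R-Levi subgroups; relative $L_{\mu_0}$-irreducibility then follows for free, your construction of $S$ proceeds unchanged, and the existence of a smaller $L_{\lambda'}$ contradicts minimality directly. With that one adjustment the argument is complete and no conjugacy statement is needed.
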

When $K = G = \GL(V)$, the possibilities for the torus $S$ in \ref{minflags-glv-iii} are products of the centres of the $\GL(V_i)$ corresponding to a direct-sum decomposition of $V$ into irreducible modules $V = V_{1} \oplus \ldots \oplus V_{r}$ $(r \ge 1)$; this gives the usual representation-theoretic characterization.

The next result more closely mirrors the representation-theoretic statement that $V$ is a completely reducible $H$-module if and only if every $H$-submodule has a complement. For this, however, we require an additional hypothesis. Assume once more that $G = \GL(V)$, and write $\mathcal{S}_{K}$ for the set of subspaces of $V$ which arise in flags from $\FK$.
\begin{cor}
\label{cor:maxflags}
Let $H, K$ be subgroups of $\GL(V)$ with $K$ reductive.
Suppose $\mathcal{MF}_K \sse \mathcal{MF}_{\GL(V)}$.
Then the following are equivalent:
\begin{enumerate}
	\item $H$ is relatively $\GL(V)$-completely reducible with respect to $K$. \label{minflagsi}
	\item For each $U\in \SK$ which is stabilized by $H$ there exists $W\in \SK$ such that $H$ stabilizes $W$ 
	and $V=U\oplus W$, as an $H$-module. \label{minflagsii}
\end{enumerate}
\end{cor}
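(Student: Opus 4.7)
The plan is to reduce both directions to Corollary~\ref{cor:minflags-glv}, using the equivalence of its conditions (i) and (ii), and to bridge via a preliminary observation identifying the non-trivial members of $\SK$ with single-subspace flags in $\MFK$.

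The bridging claim is: under the hypothesis $\MFK \sse \mathcal{MF}_{\GL(V)}$, every non-trivial $U \in \SK$ satisfies $(U) \in \MFK$. To prove it, I fix $U \in \SK$ and choose $\lambda \in Y(K)$ so that $U$ is a step of the flag $f_\lambda$ associated to $P_\lambda$. Since $G = \GL(V)$ is connected, Lemma~\ref{lem:intersection} gives $\PK = \PK'$, so $P_\lambda$ equals the intersection of the $Q \in \MK$ that contain it. Under the hypothesis each such $Q$ is a maximal parabolic of $\GL(V)$, hence equals $\Stab_{\GL(V)}(U_Q)$ for a single subspace $U_Q \sse V$. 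Then $P_\lambda = \bigcap \Stab_{\GL(V)}(U_Q)$ is the stabilizer of the flag whose steps are the $U_Q$, and since a flag in $V$ is determined by its $\GL(V)$-stabilizer, this flag coincides with $f_\lambda$. So every step of $f_\lambda$ arises as some $U_Q$, and in particular $(U) \in \MFK$.

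With the bridging claim in hand, (i)~$\Rightarrow$~(ii) follows quickly: for $H$-stable $U \in \SK$, the claim yields $(U) \in \MFK$, which is $H$-stable, so Corollary~\ref{cor:minflags-glv}(ii) furnishes an $H$-stable opposite $(W) \in \MFK$. Being opposite to $(U)$ in $\GL(V)$ means exactly $V = U \oplus W$, and $W \in \SK$ because $(W) \in \FK$. For (ii)~$\Rightarrow$~(i), I verify the opposite-flag criterion of Corollary~\ref{cor:minflags-glv}(ii): an $H$-stable $f \in \MFK$ has the form $(U)$ for some $H$-stable $U \in \SK$; hypothesis (ii) supplies an $H$-stable $W \in \SK$ with $V = U \oplus W$; the bridging claim gives $(W) \in \MFK$, and $V = U \oplus W$ ensures $(W)$ is opposite to $(U)$ in $\GL(V)$.

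The only substantive step is the bridging claim, where the hypothesis must be combined with the connectedness of $\GL(V)$ (invoked through Lemma~\ref{lem:intersection}). Without connectedness, the correspondence $\SK \leftrightarrow \MFK$ can break down, and one would have to engage directly with $\MFK$-members of length greater than one.
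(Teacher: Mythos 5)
Your proof is correct and takes essentially the same approach as the paper, which cites Corollary~\ref{cor:minflags-glv} together with Corollary~\ref{cor:mkvssk}. Your ``bridging claim'' is precisely the content of Corollary~\ref{cor:mkvssk} (that under the hypothesis $\SK$ coincides with the single-subspace flags in $\FK$, each minimal); you simply re-derive it from Lemma~\ref{lem:intersection}\ref{lem:intersection-ii} rather than quoting Corollary~\ref{cor:mkvssk}, and then apply Corollary~\ref{cor:minflags-glv} exactly as the paper intends.
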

Corollary~\ref{cor:maxflags} readily follows from Corollary~\ref{cor:minflags-glv} and Corollary~\ref{cor:mkvssk}. Note that the implication \ref{minflagsi} $\Rightarrow$ \ref{minflagsii} can fail without the hypothesis on $\MFK$, see Example \ref{ex:ConexStabX}. The essential problem is that an $H$-stable member of $\mathcal{S}_{K}$ need not arise from an $H$-stable flag in $\FK$.

Corollary~\ref{cor:maxflags} may be viewed as a generalization of \cite[Prop.~5.1]{BMRT:relative}. The latter gives a representation-theoretic characterization of relative $\GL(V)$-complete reducibility in the case that $K = \GL(U)$ for a subspace $U$ of $V$, which is closely related to the condition in Corollary~\ref{cor:maxflags}, see Lemma~\ref{lem:GL(U)} and Corollary~\ref{cor:mkvssk}.

Particularly natural candidates for the subgroup $K$ in $\GL(V)$
are the classical groups $\SO(V)$ and $\SP(V)$. This is the theme of our next result,
which gives a characterization of 
relative $\GL(V)$-complete reducibility 
with respect to $\SO(V)$ or $\SP(V)$
in terms of totally singular or totally isotropic subspaces.

\begin{cor}
\label{cor:classicalK}
Let $H$ be a subgroup of $\GL(V)$ and let $K = \SP(V)$ (resp.\ $\SO(V)$). Then the following are equivalent:
\begin{enumerate}
\item
$H$ is relatively $\GL(V)$-completely reducible with respect to $K$. \label{classical-k-i}
\item
Whenever $H$ stabilizes a totally isotropic (resp.\ totally singular) subspace $U$ and its annihilator
$U^{\perp}$, there exists a totally isotropic (resp.\ totally singular) subspace $W \sse V$ such 
that $H$ stabilizes $W$ and $W^{\perp}$, and $V=W\oplus U^{\perp} = U \oplus W^{\perp}$ as $H$-modules. \label{classical-k-ii}
\end{enumerate}
\end{cor}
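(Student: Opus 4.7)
The plan is to derive Corollary~\ref{cor:classicalK} as a direct translation of Corollary~\ref{cor:minflags-glv} once $\MFK$ and the oppositeness relation within it are made explicit for $K = \SP(V)$ or $K = \SO(V)$. The first task is to identify $\MFK$. A maximal parabolic of $K$ is the $K$-stabilizer of a single totally isotropic (resp.\ totally singular) subspace $U$ of $V$. For any cocharacter $\lambda \in Y(K)$ realising this parabolic, the ambient R-parabolic $P_\lambda \le \GL(V)$ is the $\GL(V)$-stabilizer of the flag $(U, U^\perp)$ when $U \neq U^\perp$, and of $(U)$ when $U = U^\perp$ (so $\dim U = \tfrac12 \dim V$). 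Thus $\MFK$ consists precisely of flags of these two shapes.

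The second task is to describe oppositeness within $\MFK$. Two R-parabolics of $\GL(V)$ are opposite exactly when their flags arise from a common grading of $V$ in opposite senses; equivalently, their Levi types agree and the subspaces in the two flags are pairwise complementary. Hence a length-one flag $(U)$ (with $U = U^\perp$) has opposite $(W) \in \MFK$ iff $W$ is totally isotropic/singular with $V = U \oplus W$; and a length-two flag $(U, U^\perp)$ has opposite $(W, W^\perp) \in \MFK$ iff $W$ is totally isotropic/singular with $\dim W = \dim U$ and $V = U \oplus W^\perp = U^\perp \oplus W$. Non-degeneracy of the form makes these last two complementarity conditions equivalent.

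With these identifications in hand, condition (ii) of Corollary~\ref{cor:minflags-glv} translates verbatim into condition~\ref{classical-k-ii}: an $H$-stable member of $\MFK$ corresponds to a totally isotropic (resp.\ singular) subspace $U$ for which $H$ stabilizes both $U$ and $U^\perp$, and an $H$-stable opposite corresponds to a subspace $W$ of the same type with $H$ stabilizing $W$ and $W^\perp$ and satisfying the complementarity. The phrase ``as $H$-modules'' is automatic since $H$ stabilizes all four subspaces involved.

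The main obstacle is cosmetic rather than substantive, and lies in accurate bookkeeping of $\MFK$ and of oppositeness within it. One point worth noting is the case $K = \SO(V)$ with $\dim V$ even, where maximal singular subspaces form two $K$-orbits giving non-conjugate maximal parabolics of $K$. However, since oppositeness is measured in $\GL(V)$ rather than in $K$, the orbit type of $W$ plays no role in the argument: any maximal singular $W$ with $V = U \oplus W$ yields a valid opposite in $\MFK$.
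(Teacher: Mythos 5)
Your proof follows essentially the same route as the paper: identify $\MFK$ and apply Corollary~\ref{cor:minflags-glv}, with the oppositeness condition in $\GL(V)$ translating into the stated complementarity of $U, U^\perp, W, W^\perp$. The final description of $\MFK$ you arrive at (flags $(U, U^\perp)$, degenerating to $(U)$ when $U = U^\perp$) and your analysis of opposites are both correct, and your observation that the two complementarity conditions $V = U \oplus W^\perp$ and $V = U^\perp \oplus W$ are equivalent once $\dim W = \dim U$ is a nice explicit check.

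One small imprecision is worth flagging. You justify the description of $\MFK$ by passing through the maximal parabolic subgroups of $K$, asserting that these are exactly the $K$-stabilizers of single totally isotropic/singular subspaces. For $K = \SO_{2n}$ this is not quite right: the stabilizer in $K$ of a totally singular subspace of dimension $n-1$ is \emph{not} a maximal parabolic of $K$ (it lies in the two maximal parabolics stabilizing the two maximal singular overspaces), yet the corresponding flag $U \subseteq U^\perp$ \emph{is} minimal in $\FK$, since the corresponding $P_\lambda \in \PK$ is maximal (it is not contained in $P_\mu$ for the cocharacters $\mu$ giving length-one flags, as $\dim U = n-1 \ne n$). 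This is exactly the phenomenon of Example~\ref{ex:ConexStabX}: membership in $\MK$ is governed by maximality of $P_\lambda$ in the ambient poset $\PK$, not by maximality of $P_\lambda(K)$ in $K$. The paper avoids this by arguing directly with the shape of flags: every flag in $\FK$ is a self-dual nested chain $U_1 \subseteq \ldots \subseteq U_r \subseteq U_r^\perp \subseteq \ldots \subseteq U_1^\perp$, so the minimal ones are $U \subseteq U^\perp$ (including $U = U^\perp$). That observation gives the correct $\MFK$ without any detour through the internal parabolic structure of $K$, and is the cleaner way to fill the gap in your bookkeeping.
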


In the setting of Corollary~\ref{cor:classicalK}, 
flags in $\FK$ have the form 
\begin{equation*}
\label{eq:isotropic}
U_1\sse \ldots  \sse U_r \sse U_r^{\perp}\sse \ldots\sse U_1^{\perp}\sse V,
\end{equation*}
so that minimal flags in $\FK$ are of the form $U\sse U^{\perp} \sse V$ for $U$ a totally isotropic 
(resp.\ totally singular) subspace,
hence the result is immediate from Corollary~\ref{cor:minflags-glv}.

Note that we cannot relax the requirement in Corollary~\ref{cor:classicalK}\ref{classical-k-ii} that $H$ stabilizes $U^\perp$ and $W^{\perp}$, since $H$ does not need to leave the form on $V$ invariant (i.e.\ $H$ need not be a subgroup of $K$).

In Section \ref{sec:rationality}, we present a brief investigation of the notion of relative $G$-complete reducibility
over an arbitrary field, obtaining rational versions of Theorems~\ref{thm:minflags-general} and \ref{thm:condition-pk-prime}. In Appendix~\ref{sec:appendix} we give an illustrative example of Theorem~\ref{thm:minflags-general}, with $K$ simple of exceptional type and $G = \GL(V)$ for a $K$-module $V$.

\section{Preliminaries}
\label{sec:Preliminaries}

We work over a field $k$, which is taken to be algebraically closed except in Section~\ref{sec:rationality}.
Let $G$ be a reductive algebraic group defined over $k$ -- we allow the possibility that $G$ is not connected. 
Let $H$ be a closed subgroup of $G$.
We write $H^\circ$ for the identity component of $H$.

For the set of cocharacters (one-parameter subgroups) of $G$ we write $Y(G)$.
 
Suppose $G$ acts on a variety $X$ and let $x \in X$. 
Then for each cocharacter $\lambda\in Y(G)$ we define a morphism of varieties $\phi_{x,\lambda}:k^{*}\rightarrow X$ via $\phi_{x,\lambda}(a)=\lambda(a)\cdot x$.
If this morphism extends to a morphism $\overline{\phi}_{x,\lambda}: k\to X$, then we say that the limit 
$\lim_{a\to 0} \lambda(a)\cdot x$ exists and set this limit equal $\overline{\phi}_{x,\lambda}(0)$.
For each cocharacter $\lambda\in Y(G)$, let $P_\lambda=\{g\in G\mid \lim_{a\to 0} \lambda(a)g\lambda(a)^{-1} \text{ exists}\}$ and $L_\lambda=\{g\in G\mid \lim_{a\to 0} \lambda(a)g\lambda(a)^{-1}=g\}$.
Following \cite[\S 6]{BMR}, 
we call $P_\lambda$ an \emph{R-parabolic subgroup} of $G$ and $L_\lambda$ an \emph{R-Levi subgroup} of $G$. As mentioned previously, if $G$ is connected then these R-parabolic subgroups and their R-Levi subgroups are precisely the 
parabolic subgroups and their Levi subgroups. 
If $K$ is a reductive subgroup of $G$ and $\lambda\in Y(K)$, we always denote by
$P_\lambda$ the R-parabolic subgroup of $G$ attached to $\lambda$; if we
need to consider the corresponding R-parabolic subgroup of $K$ we write $P_\lambda(K)$ (and similarly for R-Levi subgroups). We recall the following central notions from \cite{BMRT:relative}.
\begin{defn}
\label{def:realtCR}
Let $H$ and $K$ be subgroups of $G$ with $K$ reductive.
We say that \emph{$H$ is relatively $G$-completely reducible with respect to $K$} if, for every $\lambda\in Y(K)$ such that $H$ is contained in $P_\lambda$, there exists $\mu\in Y(K)$ such that $P_\lambda=P_\mu$ and $H$ is contained in $L_\mu$. We sometimes use the abbreviations \emph{relatively $G$-cr with respect to $K$}. We say that \emph{$H$ is relatively $G$-irreducible with respect to $K$} if $H$ is not contained in any subgroup $P_{\lambda}$ with $\lambda \in Y(K)$.
\end{defn}

Note that $H$ is relatively $G$-completely reducible with respect to $K$ if and only if $H$ is relatively $G$-completely reducible with respect to $K^{\circ}$, and similarly for relative $G$-irreducibility.
In the case when $K = G$, Definition~\ref{def:realtCR} coincides with the usual definitions, cf.\ \cite{BMR}.

\begin{lem}
	\label{lem:intersection} Let $K \le G$ be reductive groups with $G$ connected, and let $\PK$, $\MK$ and $\PK'$ be as in the introduction.
\begin{enumerate}
	\item Let $P \in \PK$ and let $Q$ be a maximal R-parabolic subgroup of $G$ containing $P$. Then there exists $P' \in \MK$ such that $P \le P' \le Q$. \label{lem:intersection-i}
	\item Let $P_{\lambda} \in \PK$ with $\lambda(k^{\ast}) \le S$, a fixed maximal torus of $K$. Then $P = \bigcap_{i = 1}^{m} P_{\lambda_i}$ for some $\lambda_i \in Y(S)$ such that $P_{\lambda_i} \in \MK$ for all $i$. In particular, $\PK = \PK'$. \label{lem:intersection-ii}
\end{enumerate}
\end{lem}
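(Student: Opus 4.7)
The plan is to analyse both parts via the hyperplane arrangement $\mathcal{A}$ on $V := Y(S) \otimes \mathbb{R}$ cut out by the kernels of the nonzero restrictions $\alpha|_S$ for $\alpha \in \Phi(G,T)$, where $T$ is a maximal torus of $G$ containing $S$. For $\nu \in Y(S)$, the R-parabolic $P_\nu$ depends only on the face $F_\nu$ of $\mathcal{A}$ containing $\nu$, and the inclusion $P_\nu \le P_{\nu'}$ is equivalent to $N(F_{\nu'}) \subseteq N(F_\nu)$, where $N(F) := \{\alpha \in \Phi(G,T) : \langle \alpha, \nu \rangle < 0\}$ for any $\nu$ in the interior of $F$.

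For (ii), first observe that the closed face $\bar F_\lambda$ is a polyhedral cone with lineality space $V_0 := \bigcap_\alpha \ker(\alpha|_S)$, so $\bar F_\lambda / V_0$ is pointed and generated by finitely many extreme rays. I would choose integral representatives $\lambda_1, \dots, \lambda_m \in Y(S)$, one on each extreme ray, so that each $\lambda_i$ lies on a 1-dimensional face of $\mathcal{A}$ modulo $V_0$. A polyhedral decomposition $\lambda - v_0 = \sum_i c_i \lambda_i$ with $v_0 \in V_0$ and $c_i > 0$ immediately gives $N(F_\lambda) \subseteq \bigcup_i N(F_{\lambda_i})$ by strict positivity of the $c_i$, while $\lambda_i \in \bar F_\lambda$ gives the reverse containment. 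Dualising, $P_\lambda = \bigcap_i P_{\lambda_i}$, and each $P_{\lambda_i}$ is maximal in $\{P_\nu : \nu \in Y(S)\} \setminus \{G\}$ by construction.

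The hardest step is upgrading this to maximality of each $P_{\lambda_i}$ in all of $\PK$. If $P_{\lambda_i} < P_\eta$ for some $\eta \in Y(K)$ with $P_\eta \ne G$, then $P_\eta \supseteq T$ (via $T \le L_{\lambda_i} \le P_{\lambda_i}$), so $P_\eta = P_\tau$ for some $\tau \in Y(T)$. The intersection $P_\eta \cap K^\circ$ is an R-parabolic of $K^\circ$ containing $S$, hence of the form $P_{\eta'}(K^\circ)$ for some $\eta' \in Y(S)$. A comparison, using that cocharacters into $S$ produce the R-parabolics of $K^\circ$ and of $G$ coherently, promotes this to $P_{\eta'}(G) = P_\tau$, which exhibits an $S$-cocharacter $\eta'$ with $P_{\lambda_i} < P_{\eta'}$, contradicting the choice of $\lambda_i$ on an extreme ray of $\bar F_\lambda / V_0$.

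For (i), the plan is to repeat the scheme of (ii): fix $\mu \in Y(K)$ with $P = P_\mu$, conjugate in $K^\circ$ so that $\mu(k^\ast) \le S$, and choose a maximal torus $T \supseteq S$ of $G$ contained in $L_\mu$. Then $Q \supseteq T$, so $Q = P_\tau$ for some $\tau \in Y(T)$. I would take $P'$ to be a maximal element of the finite nonempty poset $\{R \in \PK : P \le R \le Q\}$, and argue via the same comparison mechanism that $P'$ is actually maximal in all of $\PK$: any strict enlargement of $P'$ inside $\PK$ would, as above, arise from an $S$-cocharacter, producing (via a perturbation of $\tau$ towards the associated $S$-direction) an element of $\PK$ strictly between $P'$ and $Q$ and contradicting the maximality of $P'$ in our poset. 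The identity $\PK = \PK'$ then follows immediately from (ii).
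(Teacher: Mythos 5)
Your approach to part (ii) via the hyperplane arrangement $\mathcal{A}$ on $Y(S)\otimes\mathbb{R}$ is a genuinely different route from the paper, which first proves (i) by an inductive root-system argument and then derives (ii) from (i) together with the fact that in a connected group every parabolic is the intersection of the maximal ones above it. Your polyhedral decomposition of $\bar F_\lambda$ into extreme rays to get $P_\lambda = \bigcap_i P_{\lambda_i}$ with each $P_{\lambda_i}$ maximal among $\{P_\nu : \nu\in Y(S)\}$ is clean. However, the "comparison" step you flag as hardest has a real gap as you phrase it: knowing that $P_\eta\cap K^\circ = P_{\eta'}(K^\circ)$ for some $\eta'\in Y(S)$ does \emph{not} imply $P_{\eta'}(G) = P_\eta(G)$. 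For instance if $K=S=\mathbb{G}_m\hookrightarrow\GL_2$ via $t\mapsto\diag(t,1)$, both $\eta$ and $-\eta$ give $P(K^\circ)=K$, yet $P_\eta(G)$ and $P_{-\eta}(G)$ are opposite Borels. The correct move (and what the paper does in its proof of (ii)) is to conjugate $\eta$ itself by an element $u\in P_\eta(K^\circ)$ so that $u\cdot\eta$ has image in $S$; since $u\in P_\eta(K^\circ)\le P_\eta(G)$ and the latter is self-normalizing, $P_{u\cdot\eta}(G)=P_\eta(G)$, and you compare with \emph{this} particular cocharacter rather than an arbitrary $\eta'$. With this repair your (ii) argument works and is a nice alternative.

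Part (i) is where I think the proposal genuinely falls short. Taking $P'$ maximal in $\{R\in\PK : P\le R\le Q\}$ and trying to upgrade this to maximality in all of $\PK$ requires producing, from a hypothetical $R'\in\PK$ with $P'\lneq R'$ and $R'\not\le Q$, some element of $\PK$ strictly between $P'$ and $Q$. Your proposed device, "a perturbation of $\tau$ towards the associated $S$-direction," does not produce a cocharacter of $K$: if $\tau\in Y(T)$ and $\eta'\in Y(S)$ then $\tau+\epsilon\eta'$ lies in $Y(T)$ but generally not in $Y(S)$, so the resulting parabolic need not be in $\PK$ at all; while perturbing $\eta$ (with $P'=P_\eta$) towards $\eta'$ yields a \emph{smaller} parabolic, not a larger one. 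Note also that (i) does not follow formally from your (ii): from $P=\bigcap_i P_{\lambda_i}\le Q$ one cannot conclude $P_{\lambda_j}\le Q$ for some $j$. This is precisely why the paper has to prove (i) directly; its proof constructs a very specific cocharacter $\chi = n_1\lambda - n_2\nu$ (where $P_\nu\in\MK$ lies above $P_\lambda$ but not inside $Q$, and $n_1/n_2$ is chosen as an extremal ratio of root pairings) and verifies by a careful simple-root analysis that $P_\chi$ is a standard parabolic with $P_\lambda\lneq P_\chi\le Q$, then inducts on the height of $P$ in $\PK$. Some argument of this kind seems unavoidable; your sketch for (i) needs to be replaced rather than merely tightened.
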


\begin{proof} \vspace{-\topsep} \ref{lem:intersection-i} Note that maximal chains of R-parabolic subgroups in the posets $\mathcal{P}$ and $\PK$ are finite; we argue by induction on the height of $P$ in $\PK$. If $P \in \MK$ then there is nothing to prove. Now suppose that $P$ is not maximal in $\PK$. Then there exist $\lambda$, $\nu \in Y(K)$ such that $P = P_{\lambda} \le P_{\nu}$ and $P_{\nu} \in \MK$. If $P_{\nu} \le Q$ then we are done. So we assume that $P_{\nu} \nleq Q$.

Let $\mu\in Y(G)$ such that $Q=P_\mu$. There is a maximal torus $T$ of $G$ in $P_{\lambda}$ such that $T \cap K$ is a maximal torus of $K$ and such that $T \le P_{\lambda} \le P_{\nu}$. Let $T \le B \le P_\lambda$ be a Borel subgroup of $G$.

Let $\Phi$ be the set of roots of $G$ with respect to $T$ and let $\Phi^{+}$ be the set of positive roots of $G$ with respect to $B$, with corresponding set of simple roots $\Delta$. For $I \sse \Delta$, we denote by $P_I$ the standard parabolic subgroup of $G$ generated by $T$ and the root groups corresponding to roots in $-I \cup \Phi^{+}$. By the proof of \cite[Prop.~8.4.5]{spr2}, setting $I_\xi :=\{\alpha\in\Delta\mid \langle \alpha, \xi \rangle =0 \}$ for $\xi \in Y(T)$, where $\left<\alpha,\xi\right>$ denotes the standard pairing between characters and cocharacters of $T$, if $P_{\xi}$ contains $B$ then $P_{I_\xi}=P_\xi$. 
Because of this, we have $I_\lambda \sse I_\nu$ and $I_\lambda \subsetneq I_\mu$. 
Note also that since $P_\mu$ is a maximal parabolic, we have $\Delta\setminus I_\mu = \{\alpha_0\}$, a single simple root.
Since $P_\nu$ is not contained in $P_\mu$, we have $\alpha_0 \in I_\nu$.
Hence $\Delta = I_\mu \cup I_\nu$.

Given any $\alpha \in I_\mu\setminus I_\lambda$, we have $\langle \alpha,\lambda\rangle >0$.
Further, since $I_\nu$ is not contained in $I_\mu$, there is at least one choice of $\alpha\in I_\mu\setminus I_\lambda$ with $\langle \alpha,\nu \rangle >0$ as well. 
Let $n_1,n_2 \in \mathbb{N}$ be such that $\frac{n_1}{n_2} \in \mathbb{Q}_{>0}$ is the maximum of all ratios $\frac{\langle \alpha, \nu\rangle}{\langle \alpha, \lambda\rangle}$ over all $\alpha\in I_\mu \setminus I_\lambda$, and set $\chi := n_1\lambda - n_2\nu$. Note that $\chi \in Y(K)$ and thus $P_{\chi} \in \PK$. We show in the following paragraph that $P_\chi$ is a standard parabolic subgroup of $G$ properly containing $P_\lambda$ and contained in $P_\mu$. This is enough to complete the proof, by induction.

To see that $P_\chi$ is standard, let $\beta\in \Delta$. Then $\beta \in I_\mu$ or $\beta \in I_\nu\setminus I_\mu$, because $\Delta = I_\mu \cup I_\nu$, as observed above.
In the first case, we have $\langle \beta,\chi \rangle \geq 0$, by the choice of $\frac{n_1}{n_2}$ above.
In the second case, $\langle \beta, \nu \rangle = 0$, so we have $\langle \beta,\chi \rangle = n_1\langle \beta, \lambda \rangle \geq 0$.
Hence $\langle \beta,\chi \rangle \geq 0$ for all $\beta \in \Delta$, which shows that $P_\chi$ is standard.
Now, to see that $P_\lambda$ is properly contained in $P_\chi$ it suffices to check that $I_\lambda$ is properly contained in $I_\chi$. 
The containment is clear because $I_\lambda \subseteq I_\nu$ and $\chi$ is a combination of $\lambda$ and $\nu$.
Moreover, if we let $\alpha \in I_\mu\setminus I_\lambda$ be such that the maximum value $\frac{n_1}{n_2}$ above is attained, then $\langle \alpha,\lambda \rangle >0$, but $\langle \alpha,\chi\rangle = 0$, so the containment is proper.
Finally, to see that $P_\chi \subseteq P_\mu$, we only need to see that $\langle \chi,\alpha_0 \rangle >0$, where $\alpha_0\in \Delta$ is the unique simple root outside $I_\mu$, as above.
But we have argued that $\alpha_0\in I_\nu$, and $\alpha_0 \not\in I_\lambda$ since $P_\lambda$ is proper in $P_\mu$.
Hence $\langle \alpha_0,\chi\rangle = n_1\langle \alpha_0,\lambda\rangle >0$, and we are done. 

Thus, by induction there exists $P' \in \MK$ such that $P_\chi \le P' \le P_{\mu} = Q$, which completes the proof of (i). 

\ref{lem:intersection-ii} Since $G$ is connected, the R-parabolic subgroup $P_{\lambda}$ is equal to the intersection of all maximal R-parabolic subgroups of $G$ which contain it. By \ref{lem:intersection-i}, for each such maximal R-parabolic subgroup $Q_{i}$ $(i = 1,\ldots,m)$ we can find an R-parabolic subgroup $P_{i} \in \MK$ with $P_{\lambda} \le P_{i} \le Q_{i}$. Then $P_\lambda = \bigcap_{i = 1}^{m} P_{i}$. Each subgroup $P_{i}$ has the form $P_{\lambda_i}$ for some cocharacter $\lambda_i$ of $K$. Now $S \le P_{\lambda} \le P_{\lambda_i}$ for each $i$, so $S$ is a maximal torus of $P_{\lambda_i}(K)$. Since $\lambda_i(k^{\ast})$ is a sub-torus of $P_{\lambda_i}(K)$, it is $P_{\lambda_i}(K)$-conjugate to a subgroup of $S$. So we can replace $\lambda_i$ by an appropriate $P_{\lambda_i}$-conjugate so that $\lambda_i \in Y(S)$ for each $i$, as required.
\end{proof}

The following example shows that Lemma~\ref{lem:intersection}\ref{lem:intersection-ii} can fail when $G$ is not connected. 
\begin{exmp} \label{ex:intersection}
Let $T$ be a $1$-dimensional torus, let $\left<x\right>$ be cyclic of order $8$ and let $G = K = T^{8} \rtimes \left<x\right>$, with $x$ permuting the factors of $T^{8}$ in the obvious manner. Then each R-parabolic subgroup of $G$ contains $T^{8} = G^{\circ}$, and every subgroup $G^{\circ} \le P \le G$ arises as an R-parabolic subgroup $P_{\lambda}$, depending on whether the $1$-dimensional torus $\lambda(k^{\ast})$ is centralized by $x$, $x^{2}$, $x^{4}$ or none of these. Then $T^{8} \rtimes \left<x^{2}\right>$ is the unique maximal proper R-parabolic subgroup of $G$, and in particular its subgroup $T^{8} \rtimes \left<x^{4}\right>$ is not the intersection of the maximal R-parabolic subgroups of $G$ containing it.
\end{exmp}

\begin{lem} \label{lem:IntersectionParabolic}
	Let $G$ be reductive, and let $\lambda_i$ $(i = 1,\ldots,m)$ be pairwise commuting cocharacters of $G$ such that there exists a Borel subgroup $B$ of $G$ with $B \subseteq P_{\lambda_{i}}$ for each $i$. Then there exist postive integers $n_i$ $(i = 1,\ldots,m)$ such that $P_{\lambda} = \bigcap_{i = 1}^{m} P_{\lambda_i}$, where $\lambda = \sum_{i = 1}^{m} n_i \lambda_i$.
\end{lem}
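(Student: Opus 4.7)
The approach is to reduce to the case where all the $\lambda_i$ lie in a common maximal torus contained in $B$, and then invoke the standard-parabolic description $P_\xi = P_{I_\xi}$ already used in the proof of Lemma~\ref{lem:intersection}.

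For the reduction, since the $\lambda_i$ pairwise commute, their images generate a torus $S$ contained in some maximal torus $T_0$ of $G$. Fix a maximal torus $T$ of $B$ and set $P := \bigcap_i P_{\lambda_i}$. Both $T$ and $T_0$ lie in $P^\circ$: the former because $T \le B \le P_{\lambda_i}$ for each $i$, and the latter because $T_0$ centralizes $S$ and therefore lies in each $L_{\lambda_i} \le P_{\lambda_i}$. Since $P^\circ \supseteq B$, the subgroup $P^\circ$ is a parabolic of $G^\circ$, hence connected, so there exists $g \in P^\circ$ with $gT_0 g^{-1} = T$. Replacing each $\lambda_i$ by $g \cdot \lambda_i$ does not alter any $P_{\lambda_i}$ (since $g \in P_{\lambda_i}$) and transforms $\lambda := \sum_i n_i \lambda_i$ into $g \cdot \lambda$, so it suffices to prove the conclusion under the assumption that each $\lambda_i \in Y(T)$.

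In this setting, the observation recalled in the proof of Lemma~\ref{lem:intersection} gives $P_{\lambda_i} = P_{I_{\lambda_i}}$, where $I_{\lambda_i} = \{\alpha \in \Delta : \langle \alpha, \lambda_i \rangle = 0\}$ and $\Delta$ is the base of simple roots determined by $B$ and $T$. The containment $B \le P_{\lambda_i}$ forces $\langle \alpha, \lambda_i \rangle \ge 0$ for every $\alpha \in \Delta$. For any choice of positive integers $n_i$, the cocharacter $\lambda = \sum_i n_i \lambda_i$ therefore satisfies $\langle \alpha, \lambda \rangle = \sum_i n_i \langle \alpha, \lambda_i \rangle \ge 0$, with equality if and only if $\langle \alpha, \lambda_i \rangle = 0$ for every $i$. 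Hence $I_\lambda = \bigcap_i I_{\lambda_i}$, and the standard combinatorial fact that $P_{I \cap J} = P_I \cap P_J$ for subsets of $\Delta$ (a consequence of the inclusion-preserving bijection between subsets of $\Delta$ and standard parabolic subgroups) delivers $P_\lambda = P_{I_\lambda} = \bigcap_i P_{I_{\lambda_i}} = \bigcap_i P_{\lambda_i}$.

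The main technical step is the reduction: we need a single element $g$ that simultaneously conjugates every $\lambda_i(k^\ast)$ into the fixed torus $T$ while staying inside each $P_{\lambda_i}$, so that the $P_{\lambda_i}$ and hence also $P_\lambda$ are undisturbed. This is exactly where the pairwise commuting hypothesis enters, as it is what places all the $\lambda_i(k^\ast)$ in a single maximal torus $T_0$ and allows one conjugation to do the job for every index at once. Once the $\lambda_i$ have been moved into $Y(T)$, the argument is essentially combinatorial, and the positivity of the $n_i$ is used only to ensure that no additional zeros appear in $I_\lambda$.
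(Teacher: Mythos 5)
Your reduction to a common maximal torus is clean and supplies a detail the paper leaves implicit: the pairwise-commuting hypothesis places all $\lambda_i$ in a torus $T_0$ contained in $\bigcap_i L_{\lambda_i}$, and a single conjugation by $g \in P^\circ = \bigcap_i P_{\lambda_i}^\circ$ moves $T_0$ into the fixed torus $T \le B$ without disturbing any $P_{\lambda_i}$. The combinatorial step $I_\lambda = \bigcap_i I_{\lambda_i}$ together with $P_I \cap P_J = P_{I\cap J}$ then correctly proves the lemma \emph{for connected $G$}, and does so for arbitrary positive integers $n_i$. The paper's proof opens with exactly this observation.

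The gap is that the lemma is stated for a possibly non-connected reductive $G$, and your argument does not treat the non-identity components. The identity $P_\xi = P_{I_\xi}$ is taken from the proof of Lemma~\ref{lem:intersection}, which explicitly assumes $G$ connected; for general $G$ it only gives $P_\xi \cap G^\circ = P_{I_\xi}$. Whether an element $g \notin G^\circ$ lies in $P_\xi$ depends on whether $\lim_{a\to 0}\xi(a)g\xi(a)^{-1}$ exists, which is not determined by $I_\xi$. Hence $I_\lambda = \bigcap_i I_{\lambda_i}$ does not force $P_\lambda = \bigcap_i P_{\lambda_i}$, and indeed your conclusion that arbitrary positive $n_i$ work is false: in the paper's example immediately following the lemma, $G = \SL_3(k)\langle\sigma\rangle$ with commuting $\lambda,\mu$ giving the two maximal standard parabolics of $\SL_3$ containing $B$, one has $\sigma$ centralising the image of $\lambda+\mu$, so $P_{\lambda+\mu}\ni\sigma$ while $P_\lambda\cap P_\mu = B\not\ni\sigma$; only \emph{distinct} $n_1,n_2$ yield $P_{n_1\lambda+n_2\mu}=B$. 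The paper's proof handles this by reducing, via \cite[Lem.~6.2(iii)]{BMR}, to the pair of equalities $R_u(P_\lambda) = R_u\left(\bigcap_i P_{\lambda_i}\right)$ and $L_\lambda = \bigcap_i L_{\lambda_i}$: the first reduces to the connected case since unipotent radicals lie in $G^\circ$, and the second invokes \cite[Lem.~6.2(i)]{BMR}, which is exactly where the coefficients become constrained (sufficiently large/generic). Your proof needs this extra step to cover the non-connected components.
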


\begin{proof} 
	If $G$ is connected, this follows quickly from the well-understood theory of standard parabolic subgroups (parabolic subgroups containing a fixed Borel subgroup); the subgroups $P_{\lambda_i}$ correspond to choosing subsets of simple roots of $G$, and $P_{\lambda}$ corresponds to choosing the union of these sets (independently of the choice of positive integers $n_i$). For general $G$, by \cite[Lem.~6.2(iii)]{BMR} it suffices to show that $R_u(P_\lambda) = R_u\left( \bigcap_{i = 1}^{m} P_{\lambda_i}\right)$ and $L_{\lambda} = \left(\bigcap_{i = 1}^{m} L_{\lambda_i}\right)$ for some choice of integers $n_i$. Moreover, it suffices to treat the case $m = 2$, and the general case then follows by an easy induction. Now, since $R_u(P) = R_u(P^{\circ}) = R_u(P \cap G^{\circ})$ for every R-parabolic subgroup $P$, the equality $R_u(P_\lambda) = R_u\left( P_{\lambda_1} \cap P_{\lambda_2} \right)$ follows from the corresponding result for connected $G$ and \cite[Lem.~6.2(iii)]{BMR}. Finally \cite[Lem.~6.2(i)]{BMR} tells us that the equality $L_\lambda = L_{\lambda_1} \cap L_{\lambda_2}$ holds for sufficiently large $n_1$.
\end{proof}

The following example shows that the positive integers $n_i$ cannot be chosen arbitrarily when $G$ is not connected. We thank Dr.\ Tomohiro Uchiyama for pointing this out.
\begin{exmp}
Let $G = \SL_3(k)\!\left<\sigma\right>$, where $\sigma$ is the inverse-transpose automorphism composed with conjugation by $\begin{pmatrix}0 & 0 & 1 \\ 0 & 1 & 0 \\ 1 & 0 & 0\end{pmatrix}$. Then $\sigma$ normalises the diagonal maximal torus $T$ and the upper triangular Borel subgroup $B$, swapping the root groups $U_{\alpha}$ and $U_{\beta}$ corresponding to the $(1,2)$ and $(2,3)$ matrix coordinates. Let $\lambda(c) = \diag(c,c,c^{-2})$, $\mu(c) = \diag(c^{2},c^{-1},c^{-1})$ for $c \in k^{\ast}$, so that $\lambda,\mu \in Y(T)$ with $P_{\lambda}^{\circ} = \left<T, U_{\pm \alpha}, U_{\beta}\right>$ and $P_{\mu}^{\circ} = \left<T,U_{\pm \beta},U_{\alpha}\right>$. It is evident that $\sigma$ does not normalise $P_{\lambda}^{\circ}$ or $P_{\mu}^{\circ}$, hence is not contained in $P_{\lambda}$ or in $P_{\mu}$. On the other hand, it is easily checked that $\sigma$ centralises $(\lambda+\mu)(k^{\ast})$; in particular $P_{\lambda + \mu}$ contains $\sigma$ hence is strictly larger than $P_{\lambda} \cap P_{\mu}$. On the other hand, for any choice of \emph{distinct} positive integers $n_1,n_2$, it is the case that $P_{n_1 \lambda + n_2\mu} = P_{\lambda } \cap P_{\mu} = B$.
\end{exmp}

We require the following useful result, which is \cite[Lem.~3.3]{BMRT:relative}.
\begin{lem}
	\label{lem:3.3}
	Let $K \le G$ be reductive groups.
	\begin{enumerate}
		\item
		Let $\lambda, \mu\in Y(K)$ such that $P_\lambda=P_\mu$ and 
		$u\in R_u(P_\lambda(K))$ such that $uL_\lambda(K)u^{-1}=L_\mu(K)$.
		Then $uL_\lambda u^{-1}=L_\mu$.
		\item Let $H$ be a subgroup of $G$.
		Then $H$ is relatively $G$-completely reducible with respect to $K$ if
		and only if for every $\lambda\in Y(K)$ such that $H\subseteq P_\lambda$
		there exists $u\in R_u(P_\lambda(K))$ such that 
		$H\subseteq L_{u\cdot \lambda}$. \label{lem:3.3-ii}
	\end{enumerate}
\end{lem}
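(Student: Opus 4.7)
The plan is to establish (i) directly and derive (ii) as a clean consequence. For (i), set $\nu := u \cdot \lambda$, so $(u \cdot \lambda)(a) = u \lambda(a) u^{-1}$. Using the standard inclusion $R_u(P_\lambda(K)) \subseteq R_u(P_\lambda)$, one sees $P_\nu = u P_\lambda u^{-1} = P_\lambda = P_\mu$ in $G$ (and similarly in $K$) and $L_\nu = u L_\lambda u^{-1}$ in $G$, with the $K$-parts satisfying $L_\nu(K) = u L_\lambda(K) u^{-1} = L_\mu(K)$ by hypothesis. Thus (i) reduces to the assertion: whenever $\nu, \mu \in Y(K)$ satisfy $P_\nu = P_\mu$ in $G$ and $L_\nu(K) = L_\mu(K)$ in $K$, one has $L_\nu = L_\mu$ in $G$.

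To establish this reduction, note that $\nu(k^{*})$ and $\mu(k^{*})$ both lie in the centre of the common $K$-Levi $M := L_\nu(K) = L_\mu(K)$, so in particular they commute, and $\mu(k^{*}) \subseteq M \subseteq L_\nu$ gives $\mu \in Y(L_\nu)$. Because $L_\nu \subseteq P_\nu = P_\mu$, the R-parabolic $P_\mu(L_\nu) = L_\nu \cap P_\mu$ equals $L_\nu$ itself. A weight-space analysis in the reductive group $L_\nu^\circ$ (where $P_\mu(L_\nu^\circ) = L_\nu^\circ$ forces every $\mu$-weight on the Lie algebra to be zero) then yields $\mu(k^{*}) \subseteq Z(L_\nu^\circ)$, so $L_\nu^\circ \subseteq Z_G(\mu(k^{*})) = L_\mu$. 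By symmetry $L_\nu^\circ = L_\mu^\circ$. To upgrade to equality of the full groups, use that $L_\nu$ and $L_\mu$ are R-Levi subgroups of the common R-parabolic $P_\nu$, so $L_\mu = gL_\nu g^{-1}$ for some $g \in R_u(P_\nu)$; such $g$ normalises $L_\nu^\circ$ and lies in $P_\nu^\circ$, so $g \in N_{P_\nu^\circ}(L_\nu^\circ) \cap R_u(P_\nu) = L_\nu^\circ \cap R_u(P_\nu) = \{1\}$, giving $L_\nu = L_\mu$.

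For (ii), the direction $(\Leftarrow)$ is immediate: given $\lambda \in Y(K)$ with $H \subseteq P_\lambda$, take $u$ as in the hypothesis and set $\mu := u \cdot \lambda \in Y(K)$; then $P_\mu = P_\lambda$ and $H \subseteq L_{u \cdot \lambda} = L_\mu$ witnesses relative $G$-complete reducibility. For $(\Rightarrow)$, given $\lambda \in Y(K)$ with $H \subseteq P_\lambda$, pick $\mu \in Y(K)$ as in Definition~\ref{def:realtCR} with $P_\mu = P_\lambda$ and $H \subseteq L_\mu$. In the reductive group $K$, the Levi subgroups $L_\lambda(K)$ and $L_\mu(K)$ of the common R-parabolic $P_\lambda(K) = P_\mu(K)$ are conjugate by some $u \in R_u(P_\lambda(K))$; then (i) gives $uL_\lambda u^{-1} = L_\mu$, so $H \subseteq L_\mu = L_{u \cdot \lambda}$, as required.

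The main obstacle is the closing component-group step in (i). The weight-space argument only places $\mu(k^{*})$ in the centre of $L_\nu^\circ$, so identifying $L_\nu$ with $L_\mu$ as full (possibly non-connected) groups requires the extra uniqueness of R-Levis up to $R_u(P_\nu)$-conjugation, combined with triviality of $L_\nu^\circ \cap R_u(P_\nu)$; for $G$ connected this refinement is automatic, but for general $G$ it is the subtlest point.
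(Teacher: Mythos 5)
The paper does not give its own proof of this statement; it is cited verbatim as \cite[Lem.~3.3]{BMRT:relative}, so there is no in-paper argument to compare against. Judged on its own merits, your proof is correct.

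Your reduction in (i), via $\nu := u\cdot\lambda$, to the rigidity assertion that $P_\nu = P_\mu$ together with $L_\nu(K) = L_\mu(K)$ forces $L_\nu = L_\mu$, is sound: $u\in R_u(P_\lambda(K))\subseteq R_u(P_\lambda)$ gives $P_\nu = P_\lambda = P_\mu$ and $L_\nu = uL_\lambda u^{-1}$, and the hypothesis gives $L_\nu(K) = L_\mu(K)$. The weight-space step is the standard symmetry argument: since $\mu(k^*)\subseteq L_\nu(K)\subseteq L_\nu$ (and connectedness puts it in $L_\nu^\circ$), and $L_\nu^\circ\subseteq P_\nu = P_\mu$ means all $\mu$-weights on $\mathrm{Lie}(L_\nu^\circ)$ are non-negative, the symmetry of the root system of the reductive group $L_\nu^\circ$ forces all those weights to vanish, hence $\mu(k^*)\subseteq Z(L_\nu^\circ)$ and $L_\nu^\circ\subseteq L_\mu$; by symmetry $L_\nu^\circ = L_\mu^\circ$. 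You correctly identify the passage to the full (possibly disconnected) R-Levi subgroups as the delicate point, and the argument you give works: if $g\in R_u(P_\nu)$ conjugates $L_\nu$ to $L_\mu$ then $g$ normalises $L_\nu^\circ$ and lies in $P_\nu^\circ$, and $N_{P_\nu^\circ}(L_\nu^\circ) = L_\nu^\circ$ (any element of $R_u(P_\nu^\circ)$ normalising $L_\nu^\circ$ would centralise a maximal torus, hence be trivial), so $g\in L_\nu^\circ\cap R_u(P_\nu)=\{1\}$. Part (ii) then follows exactly as you write, using the $R_u(P_\lambda(K))$-conjugacy of R-Levi subgroups of $P_\lambda(K)$ in the reductive group $K$ together with (i). A small point of exposition: you should make explicit (as you do implicitly) that $\mu\in Y(L_\nu^\circ)$ so that $P_\mu(L_\nu^\circ)$ is even defined, and that the claim ``non-negative weights are zero'' rests on the fact that $L_\nu^\circ$ is reductive, so its set of $T$-roots is symmetric.
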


The following is the final ingredient needed in the proof of Theorems~\ref{thm:minflags-general} and \ref{thm:condition-pk-prime}.
\begin{lem} \label{lem:correct-sets}
Let $K \le G$ be reductive groups. If $P \in \MK$ (resp.\ $P \in \PK'$) and $Q \in \PK$ is opposite to $P$, then $Q \in \MK$ (resp.\ $Q \in \PK'$).
\end{lem}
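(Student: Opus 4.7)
The plan is to prove both parts via a sign-flip involution $\nu \mapsto -\nu$ on $Y(K)$, which I expect will descend to an order-preserving involution on $\PK$.

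First I would use Lemma~\ref{lem:3.3}(i) to arrange that $P = P_\lambda$ and $Q = P_\mu$ with $L_\lambda = L_\mu = P \cap Q$. Then $\lambda(k^\ast), \mu(k^\ast) \in Z(L_\lambda)^\circ$, so they commute and share a common maximal torus $T_G$ of $G^\circ$. The opposition $P_\lambda \cap P_\mu = L_\lambda$, translated into sign patterns on $\Phi(G^\circ, T_G)$, forces $\mu$ and $-\lambda$ to agree on the sign of every root, giving $(P_\mu)^\circ = (P_{-\lambda})^\circ$. For the component quotient $G/G^\circ$, the Levi decomposition $P_\eta = L_\eta R_u(P_\eta)$ with $R_u(P_\eta) \le G^\circ$ yields $P_\eta/(P_\eta)^\circ \cong L_\eta/L_\eta^\circ$, which depends only on $L_\eta = L_{-\eta}$; combined with $L_\mu = L_\lambda$, this gives $Q = P_\mu = P_{-\lambda}$ as subgroups of $G$.

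Next I would establish the sign-flip identity: for any $\nu \in Y(K)$, $P_\lambda \le P_\nu$ iff $P_{-\lambda} \le P_{-\nu}$. The identity-component part is sign-pattern symmetry after conjugation to a common maximal torus; the component-quotient part is automatic from the structural isomorphism above. The $\MK$ case then follows quickly: if $Q \le Q' = P_\nu$, the sign-flip gives $P \le P_{-\nu} \in \PK$, maximality of $P$ forces $P = P_{-\nu}$, and a second application of the sign-flip yields $Q = Q'$.

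For the $\PK'$ case, I would write $P = \bigcap_j R_j$ with $R_j = P_{\nu_j} \in \MK$ containing $P$, and set $S_j := P_{-\nu_j}$. By the $\MK$ case $S_j \in \MK$, and by the sign-flip $S_j \supseteq Q$. To conclude $\bigcap_j S_j = Q$, I would apply Lemma~\ref{lem:IntersectionParabolic} to realize $\lambda$ as a positive integer combination of the $\nu_j$'s, giving $L_\lambda = \bigcap_j L_{\nu_j}$; the identity-component part of $\bigcap_j S_j = Q$ is then another sign-pattern computation, while the component-quotient part amounts to showing that every coset of $G^\circ$ common to all $L_{\nu_j}$ is represented by an element of $\bigcap_j L_{\nu_j} = L_\lambda$. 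The hard part will be this component-quotient step in the non-connected case; the opposition hypothesis, via the identification $Q = P_{-\lambda}$ in the first paragraph, should control the lifting.
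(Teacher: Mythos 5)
Your strategy is essentially the paper's: reduce to a common maximal torus $T$ of $K$ lying in $P\cap Q$, identify $Q$ with $P_{-\lambda}$, exploit that negation reverses containments among R-parabolic subgroups corresponding to cocharacters of $T$, and combine this with Lemma~\ref{lem:IntersectionParabolic} for the $\PK'$ case. The paper is terser but uses exactly this chain of ideas (it even phrases the $\MK$ step as $P_\lambda \lneq P_{-\mu}$, which is your sign-flip applied once). Two cautionary remarks. First, $\nu\mapsto -\nu$ does \emph{not} descend to a well-defined involution of $\PK$: if $g\in P_\nu(K)$ is not in the normalizer of $P_{-\nu}$ then $P_{g\cdot\nu}=P_\nu$ while $P_{-g\cdot\nu}\neq P_{-\nu}$, so the ``sign-flip'' only makes sense once a maximal torus $T$ is fixed and all cocharacters are taken in $Y(T)$. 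You do conjugate into a common torus in the execution, so the argument survives, but the framing ``order-preserving involution on $\PK$'' is misleading. Second, the ``hard part'' you leave unresolved in the $\PK'$ case is resolved in the paper by (a) choosing $T \le P\cap Q$ from the start and conjugating $\lambda$ \emph{and each} $\lambda_i$ by a single element of $R_u(P_\lambda(K))$ (via Lemma~\ref{lem:3.3}(i) and \cite[Lem.~6.11]{BMR}) so that $Q = P_{-\lambda}$, and (b) noting that the positive integers $n_i$ produced by Lemma~\ref{lem:IntersectionParabolic} work simultaneously for the $\lambda_i$ and for the $-\lambda_i$, because the unipotent-radical part of that lemma is automatic from the connected theory while the Levi condition $L_\lambda = \bigcap L_{\lambda_i}$ is unchanged under negating all cocharacters. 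Spelling out (b) is what actually closes the component-group gap you describe as ``should control the lifting''; without it the argument is incomplete for non-connected $G$.
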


\begin{proof}
We prove firstly that if $P \in \MK$, and if $Q \in \PK$ is opposite to $P$, then $Q \in \MK$. Suppose not. Then there exists $Q' \in \MK$ such that $Q \lneq Q'$. Let $T$ be a maximal torus of $K$ contained in $Q$ and $Q'$. Then there exits $\lambda \in Y(T)$ such that $P = P_{\lambda}$ and $Q = P_{-\lambda}$. Since $T \le P_{-\lambda} < Q'$, there exists $\mu \in Y(T)$ such that $Q' = P_{\mu}$. Let $P' = P_{-\mu} \in \PK$. Then $H \le P_{\lambda} < P_{-\mu}$, contradicting $P \in \MK$.

Now suppose $P \in \PK'$. Since $G$ is noetherian, we can write $P$ as a finite intersection $P = \bigcap_{i = 1}^{m} P_{i}$ with $P_{i} \in \MK$ for each $i$. Let $Q \in \PK$ be opposite to $P$, and let $T$ be a maximal torus of $K$ contained in $P \cap Q$. Then for each $i$ we have $P_{i} = P_{\lambda_i}$ for some $\lambda_{i} \in Y(T)$, and by Lemma~\ref{lem:IntersectionParabolic} we have $P = P_{\lambda}$ where $\lambda = \sum_{i = 1}^{m} n_i \lambda_i$ for some positive integers $n_i$. By Lemma~\ref{lem:3.3} and the uniqueness of opposite R-parabolic subgroups containing a given R-Levi subgroup \cite[Lem.~6.11]{BMR}, we can conjugate $\lambda$ (and each $\lambda_i$) by some fixed element of $R_{u}(P_{\lambda}(K))$ such that $Q = P_{-\lambda} = \bigcap_{i = 1}^{m} P_{-\lambda_i}$. By the paragraph above, each subgroup $P_{-\lambda_i} \in \MK$ for each $i$, hence $Q \in \PK'$.
\end{proof}

\section{Proof of Theorems \ref{thm:minflags-general} and \ref{thm:condition-pk-prime}} \label{sec: proof}

We now prove Theorems~\ref{thm:minflags-general} and \ref{thm:condition-pk-prime}. We begin with the proof of Theorem~\ref{thm:condition-pk-prime}, from which Theorem~\ref{thm:minflags-general} follows in short order.

Recall that $G$ is a (not necessarily connected) reductive algebraic group and $H$ and $K$ are subgroups of $G$, with $K$ also reductive (but not necessarily connected). Also $\PK$ is the set of R-parabolic subgroups of $G$ of the form $P_{\lambda}$ with $\lambda \in Y(K)$, $\MK$ is the set of maximal members of $\PK$ under inclusion, and $\PK'$ is the set of members of $\PK$ which can be realized as intersections of members of $\MK$.

\begin{proof}
[Proof of Theorem~\ref{thm:condition-pk-prime}]
Suppose that each member of $\PK'$ which contains $H$ admits an opposite in $\PK'$ containing $H$. Since $\MK \subseteq \PK'$ by definition, if $P \in \MK$ and $P \ge H$ then $P \in \PK'$, hence admits an opposite $Q \in \PK'$ which contains $H$. By Lemma~\ref{lem:correct-sets} we have $Q \in \MK$, as required.

Conversely, suppose that each member of $\MK$ containing $H$ admits an opposite in $\MK$ which contains $H$, and let $P \in \PK'$ such that $P \ge H$. By definition of $\PK'$ we can write $P = \bigcap_{i = 1}^{m} P_{i}$ for some $m \in \mathbb{N}$ and $P_{i} \in \MK$ for each $i$. Taking $m$ to be minimal, we proceed by induction on $m$, the case $m = 1$ being our starting hypothesis.

By Lemma~\ref{lem:3.3} we can fix a maximal torus $T$ of $K$ contained in $P$ such that $P = P_{\lambda}$ and $P_{i} = P_{\lambda_i}$ for some cocharacters $\lambda$ and $\lambda_i \in Y(T)$ for each $i$. Write this intersection as $P_{\lambda} = P_{\lambda_1} \cap \bigcap_{i = 2}^{m} P_{\lambda_i}$. By Lemma~\ref{lem:IntersectionParabolic} we are free to write $\bigcap_{i = 2}^{m} P_{\lambda_i} = P_{\nu}$ where $\nu = n_2\lambda_2 + \ldots + n_m\lambda_m$ for some positive integers $n_i$, and we are also free to replace $\lambda$ (without changing $P_{\lambda}$) such that $\lambda = n_1 \lambda_1 + n_\nu \nu$ for some positive integers $n_1$, $n_\nu$.

Since $P_{\lambda_1} \in \MK$, by hypothesis $H$ lies in some R-Levi subgroup of $P_{\lambda_1}$. Since $P_{\lambda} \le P_{\lambda_1}$ we have $R_{u}(P_{\lambda_1}(K)) \le R_{u}(P_{\lambda}(K))$. Thus we are free to replace $H$ by an $R_{u}(P_{\lambda_1}(K))$-conjugate so that $H \le L_{\lambda_1}$, and this does not change whether $H$ lies in an R-Levi subgroup of $P_{\lambda}$ corresponding to a cocharacter of $K$. Moreover, replacing $H$ with such a conjugate, we still have $H \le P_{\lambda} \le P_{\nu}$. Now since $H \le L_{\lambda_1}$, we have $\lambda_1 \in Y(C_{K}(H) \cap L_{\nu}(K)) = Y(C_{L_\nu(K)}(H))$.

Now, we can apply the induction hypothesis to $P_{\nu}$, so that there exists $\sigma \in Y(K)$ such that $P_{\sigma} \in \PK'$ is opposite to $P_{\nu}$ and $H \le L_{\sigma}$. Then there exists $u \in R_{u}(P_{\nu}(K))$ such that $u \cdot \nu \in Y(L_{\sigma}(K))$. So we set $\tau = -(u \cdot \nu)$ as cocharacters of some maximal torus of $L_{\sigma}(K)$. By the uniqueness of opposite parabolic subgroups containing a fixed maximal torus \cite[Lem.~6.11]{BMR} we have $P_{\tau} = P_{\sigma}$, $L_{\tau} = L_{\sigma}$. Since $\tau$, $\lambda_1 \in Y(C_{P_\nu(K)}(H))$, $\tau$ is $C_{P_\nu(K)}(H)$-conjugate to a cocharacter $\rho$ which commutes with $\lambda_1$. Then $P_{\rho} \cap P_{\nu} = P_{\tau} \cap P_{\nu}$ so $P_{\rho}$ is still opposite to $P_{\nu}$, and $H \le L_{\rho}$. Now $\rho$ and $\nu$ both commute with $\lambda_1$, and so the images of these cocharacters are all contained in some maximal torus of $K$, call it $S$. Since $\lambda = n_1 \lambda_1 + n_\nu \nu$, the image of $\lambda$ also lies in $S$.

Now, again by the uniqueness of opposite R-parabolic subgroups among those containing a given maximal torus of $K$, it follows that we can scale $\rho$ so that $\rho = -n_\nu\nu$ and thus $-\lambda = -n_1\lambda_1 - n_\nu \nu = -n_1\lambda_1 + \rho$ as elements of $Y(S)$. This shows that $P_{-\lambda} = P_{-\lambda_1} \cap P_{\rho}$ is opposite to $P_{\lambda}$, lies in $\PK'$ and contains $H$, as required.
\end{proof}

\begin{proof}
[Proof of Theorem~\ref{thm:minflags-general}]
The equivalence of the conditions \ref{minflags-general-i} and \ref{minflags-general-iii} follows immediately from \cite[Prop.~3.17(ii)]{BMRT:relative}. Now suppose \ref{minflags-general-i} holds and let $P \in \MK$ with $P \ge H$. By hypothesis, there exists an R-parabolic subgroup $Q \in \PK$ which is opposite to $P$ and contains $H$, and then $Q \in \MK$ by Lemma~\ref{lem:correct-sets}, so condition \ref{minflags-general-ii} holds. Note that we have not yet used the hypothesis that $G$ is connected.

Finally, suppose that \ref{minflags-general-ii} holds, so that each $P \in \MK$ which contains $H$ admits an opposite in $\MK$ which contains $H$. Since $G$ is connected by hypothesis, we have $\PK = \PK'$ by Lemma~\ref{lem:intersection}\ref{lem:intersection-ii}. Then Theorem~\ref{thm:condition-pk-prime} tells us that each $P \in \PK$ containing $H$ has an opposite in $\PK$ containing $H$, hence condition \ref{minflags-general-i} holds.
\end{proof}

\section{The case \texorpdfstring{$G = \GL(V)$}{G = GL(V)}}

We take this opportunity to note some interesting special cases of our results in the case $G = \GL(V)$. The first follows immediately from Lemma~\ref{lem:intersection}\ref{lem:intersection-i}. Recall that $\FK$ denotes the set of flags arising from parabolic subgroups corresponding to cocharacters of $K$, and $\mathcal{MF}_{K}$ denotes the minimal members of $\FK$ (i.e. those whose stabilizers lie in $\MK$).
\begin{cor}
	\label{cor:appearingU}
	Let $G = \GL(V)$, let $K \le G$ be a reductive subgroup, let $f$ be a flag in $\FK$ and let $U$ be a subspace in $f$.
	Then there is a flag $f^\prime$ in $\mathcal{MF}_K$ such that $f^\prime\preceq f$ and $U$ appears in $f^\prime$. 
\end{cor}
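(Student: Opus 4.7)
The plan is to exploit the dictionary between flags in $V$ and R-parabolic subgroups of $G = \GL(V)$: R-parabolics are precisely the stabilizers of flags, and for a parabolic $P \le \GL(V)$, the subspaces of $V$ stabilized by $P$ are exactly those appearing in the corresponding flag. With this translation, the statement reduces directly to Lemma~\ref{lem:intersection}\ref{lem:intersection-i}, and the corresponding order-theoretic bookkeeping between $\FK$ and $\PK$.

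I would first reduce to the case where $U$ is a proper nonzero subspace, since otherwise $U$ appears in every flag and it suffices to exhibit any member of $\MFK$ dominated by $f$ (which itself follows from Lemma~\ref{lem:intersection}\ref{lem:intersection-i} applied to any maximal R-parabolic of $G$ containing $\Stab_G(f)$). Then I would set $P := \Stab_G(f) \in \PK$ and define $Q := \Stab_G(0 \subsetneq U \subsetneq V)$, which is a maximal parabolic subgroup of $G$. Since $U$ is one of the subspaces in $f$, every element of $P$ stabilizes $U$, and hence $P \le Q$.

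Next I would apply Lemma~\ref{lem:intersection}\ref{lem:intersection-i} to produce $P' \in \MK$ with $P \le P' \le Q$, and take $f'$ to be the flag in $\FK$ with $\Stab_G(f') = P'$. The membership $P' \in \MK$ gives $f' \in \MFK$; the inclusion $P \le P'$, i.e., $\Stab_G(f) \le \Stab_G(f')$, gives $f' \preceq f$ in the ordering on flags; and the inclusion $P' \le Q$ forces $P'$ to stabilize $U$, so $U$ is a $P'$-stable subspace and therefore appears in $f'$.

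There is essentially no obstacle here beyond correctly setting up the correspondence; all the geometric content is absorbed into Lemma~\ref{lem:intersection}\ref{lem:intersection-i}, which is precisely why the authors flag this corollary as following immediately from that lemma.
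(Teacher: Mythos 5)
Your proof is correct and is exactly the argument the paper has in mind when it says the corollary ``follows immediately from Lemma~\ref{lem:intersection}\ref{lem:intersection-i}''; you have simply made the translation between flags and parabolics explicit, in particular using the standard fact that the $P'$-stable subspaces of $V$ are precisely the members of the flag stabilized by $P'$, which is what forces $U$ to appear in $f'$ once $P' \le Q$.
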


Recall that $\SK$ is the set of subspaces of $V$ which appear in flags from $\mathcal{F}_K$. The following is an immediate consequence of Corollary~\ref{cor:appearingU}.
\begin{cor}
\label{cor:mkvssk}
Let $K$ be a reductive subgroup of $\GL(V)$.
Then the following are equivalent:
\begin{enumerate}
	\item $\SK = \{ U\sse V\mid (U\sse V)\in\mathcal{F}_K\}$. \label{mkvssk-i}
	\item $\mathcal{MF}_K \sse \mathcal{MF}_{\GL(V)}$. \label{mkvssk-ii}
\end{enumerate}
\end{cor}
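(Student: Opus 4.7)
The plan is to prove the two implications directly by unpacking the definitions of $\mathcal{F}_K$, $\SK$, $\mathcal{MF}_K$, and the dual order $\preceq$, using the preceding Corollary~\ref{cor:appearingU} as the key tool for the nontrivial direction. Recall that a flag of length $1$ is one of the form $(U \sse V)$ with $U$ a proper nonzero subspace, and that $\mathcal{MF}_{\GL(V)}$ consists precisely of such flags. Also, $f \preceq f'$ means that the stabilizer of $f$ contains the stabilizer of $f'$, so a shorter (coarser) flag is $\preceq$-smaller than a refinement of it.

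For the implication \ref{mkvssk-i} $\Rightarrow$ \ref{mkvssk-ii}, I would take an arbitrary minimal flag $f \in \mathcal{MF}_K$, write it as $0 = V_0 \subsetneq V_1 \subsetneq \cdots \subsetneq V_r = V$, and assume for contradiction that $r \ge 2$. Pick any intermediate subspace $V_i$ with $0 < i < r$; then $V_i \in \SK$, so by hypothesis \ref{mkvssk-i} the flag $(V_i \sse V)$ lies in $\mathcal{F}_K$. Since the stabilizer of $(V_i \sse V)$ strictly contains the stabilizer of $f$, we obtain a flag strictly $\preceq$-smaller than $f$ in $\FK$, contradicting the minimality of $f$ in $\FK$. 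Hence $r = 1$ and $f \in \mathcal{MF}_{\GL(V)}$.

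For the converse \ref{mkvssk-ii} $\Rightarrow$ \ref{mkvssk-i}, let $U \in \SK$, so $U$ appears in some flag $f \in \FK$. Apply Corollary~\ref{cor:appearingU} to obtain $f' \in \mathcal{MF}_K$ with $f' \preceq f$ in which $U$ still appears. By the hypothesis \ref{mkvssk-ii}, $f' \in \mathcal{MF}_{\GL(V)}$, i.e.\ $f'$ has length $1$, so $f' = (U \sse V)$. In particular $(U \sse V) \in \FK$, which gives the containment of $\SK$ into $\{U \sse V \mid (U \sse V) \in \FK\}$; the reverse containment is tautological, completing the proof. There is no serious obstacle here: the only care needed is in handling the trivial subspaces $0$ and $V$ (which one can simply omit from $\SK$ by convention, as they contribute only the length-zero flag) and in keeping the direction of $\preceq$ straight, both of which are handled by the arguments above.
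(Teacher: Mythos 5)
Your proof is correct. The paper itself only says the corollary is an immediate consequence of Corollary~\ref{cor:appearingU}; your argument for \ref{mkvssk-ii} $\Rightarrow$ \ref{mkvssk-i} is exactly the application of that corollary the authors have in mind, and your direct argument for \ref{mkvssk-i} $\Rightarrow$ \ref{mkvssk-ii} (shortening a putative minimal flag of length $\ge 2$ using the hypothesis, contradicting minimality) is the natural way to fill in the other implication, so this is essentially the same approach, just written out.
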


Next, we note that the implication \ref{minflagsi} $\Rightarrow$ \ref{minflagsii} of Corollary~\ref{cor:maxflags} fails without the hypothesis on $\MFK$, as the following example illustrates.
\begin{exmp}
	\label{ex:ConexStabX}
Let $G=\GL_4(k)$ and let $K$ be the subgroup of diagonal matrices of the form 
$\diag(t,s, s^{-1},t^{-1})$ with $s,t \in k^*$.
Let $e_1,\ldots, e_4$ be the standard basis of $k^4$ and $U=\langle e_1, e_2, e_3 \rangle$.
Suppose that $H$ is the parabolic subgroup of $G$ corresponding to the flag $U\sse V$.
Since the flags from $\mathcal{F}_K$ have subspaces of dimension $(2,4)$, $(1,3,4)$ and $(1,2,3,4)$, the group $H$ is not contained in $P_\lambda$ for any $\lambda\in Y(K)\setminus \{1\}$.
Hence trivially, $H$ is relatively $G$-cr with respect to $K$.
Note that $U\in \SK$ and $H$ stabilizes $U$.
One checks that the complement to $U$ in the set $\SK$ is $W=\langle e_4 \rangle$.
But $H$ does not stabilize $W$.
\end{exmp}

\begin{rem}
Let $G = \GL(V)$. If $\mathcal{MF}_{K} \not\sse \mathcal{MF}_{G}$ then there exists a subgroup $H$ of $G$ such that $H$ is relatively $G$-cr with respect to $K$ and $H$ stabilizes a subspace $U'\in \SK$ but does not stabilize any complement to $U'$.
To see this, note that since $\SK\not=\{ U\sse V\mid (U\sse V)\in\mathcal{F}_K\}$, by Corollary~\ref{cor:mkvssk},
there exists a $U'$ in $\SK$ such that $(U'\sse V)\not\in\mathcal{F}_K$.
Set $H:=\Stab_G(U'\sse V)$.
Then $H$ is not contained in $P_\lambda$ for any $\lambda\in Y(K)\setminus \{1\}$.
Trivially, $H$ is relatively $G$-cr with respect to $K$.
Note that $H$ stabilizes $U'$ in $\SK$ but does not stabilize any complement to $U'$, since $H$ is a maximal parabolic subgroup of $G$.
\end{rem}

For a fixed subspace $U\sse V$, we  show 
in the following lemma that $K = \GL(U)$ satisfies 
the condition in Corollary~\ref{cor:mkvssk}(i).
A maximal torus of $G$ also satisfies the condition.
So Corollary~\ref{cor:maxflags} applies in these instances,
thanks to Corollary~\ref{cor:mkvssk}.

\begin{lem}
\label{lem:GL(U)}
Let $G = \GL(V)$ and let $U \sse V$. Fix a complement $\widetilde{U}$ to $U$ in $V$.
Let $K=\GL(U) \le G$, embedded via the decomposition $V = U\oplus\widetilde{U}$.
Then $\SK=\{ W\sse V\mid (W\sse V)\in\mathcal{F}_K\}$.
\end{lem}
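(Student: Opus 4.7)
The plan is to analyze the weight-space decomposition of $V$ induced by an arbitrary cocharacter of $K$, use this to describe $\SK$ explicitly, and then for each candidate subspace construct a cocharacter $\lambda\in Y(K)$ which realizes it as a one-step flag in $\FK$.

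First I would fix $\lambda\in Y(K)=Y(\GL(U))$ and write the weight decomposition $U=\bigoplus_{a\in\mathbb{Z}}U_{a}$ determined by $\lambda$. Since $K$ acts trivially on $\widetilde{U}$ by construction, the weight decomposition of $V$ is $V_{a}=U_{a}$ for $a\neq 0$ and $V_{0}=U_{0}\oplus\widetilde{U}$. Each subspace appearing in the flag corresponding to $P_{\lambda}$ has the form $\bigoplus_{a\geq t}V_{a}$ for some integer threshold $t$, so it lies entirely inside $U$ (when $t>0$) or contains $\widetilde{U}$ (when $t\leq 0$). Consequently,
\[ \SK \sse \{\, U' \mid U'\sse U\,\} \;\cup\; \{\, U'\oplus\widetilde{U} \mid U'\sse U\,\}. \]

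Next I would show that each such $W$ arises as a single-step flag in $\FK$. For $W=U'\sse U$, pick a complement $U''$ to $U'$ in $U$ and let $\lambda\in Y(K)$ have weight $1$ on $U'$ and weight $0$ on $U''$; the weight spaces on $V$ are then $V_{1}=U'$ and $V_{0}=U''\oplus\widetilde{U}$, giving the flag $(U'\sse V)$ as the flag attached to $P_{\lambda}$. For $W=U'\oplus\widetilde{U}$ with $U'\subsetneq U$, pick a complement $U''$ to $U'$ in $U$ and let $\lambda\in Y(K)$ have weight $0$ on $U'$ and weight $-1$ on $U''$; then $V_{0}=U'\oplus\widetilde{U}=W$ and $V_{-1}=U''$, which again yields $(W\sse V)$.

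The reverse inclusion is immediate, since a subspace whose single-step flag lies in $\FK$ tautologically belongs to $\SK$. This establishes the equality. I do not anticipate a significant obstacle: the argument is a direct computation with weight spaces, made straightforward by the fact that $K$ acts trivially on $\widetilde{U}$, which forces every member of $\SK$ to take one of the two prescribed forms and allows the realizing cocharacter to be built by hand.
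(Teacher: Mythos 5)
Your proof is correct and follows essentially the same approach as the paper: analyze the weight-space decomposition to show every member of $\SK$ is contained in $U$ or contains $\widetilde{U}$, then construct an explicit cocharacter realizing each such subspace as a one-step flag. The only difference is cosmetic—you spell out the weight-space argument that the paper summarizes as ``by inspection.''
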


\begin{proof} 
Let $(W_1\sse\ldots\sse W_m\sse V)$ be in $\mathcal{F}_K$.
One sees by inspection that for each $1\leq i\leq m$ we have $W_i\sse U$ or $\widetilde{U}\sse W_i$.
On the other hand, suppose that $W$ is a subspace contained in $U$.
Then we can find a complement $W'$ to $W$ containing $\widetilde{U}$ and the cocharacter which acts with weight $1$ on $W$ and weight $0$ on $W'$ lies in $Y(K)$ and affords the flag $(W\subseteq V)\in \mathcal{F}_K$.
Similarly, all flags $(W\subseteq V)$ with $\widetilde{U}\subseteq W$ are in $\mathcal{F}_K$.  
Hence 
\begin{equation*}
\mathcal{F}_{K}=\{ (W_1\sse\ldots\sse W_m\sse V)\in\mathcal{F}_G \mid W_i\sse U \text{ or } \widetilde{U}\sse W_i \text{ for } 1\leq i\leq m\text{, for some }m\},
\end{equation*}
and so 
\begin{equation}
\label{eq:comp}
\SK=\{ W'\sse V\mid W'\sse U \text{ or } \widetilde{U}\sse W'\}=\{ W\sse V\mid (W\sse V)\in\mathcal{F}_K\}, 
\end{equation}
as claimed.
\end{proof}

In view of \eqref{eq:comp} and Corollary~\ref{cor:mkvssk}, Corollary~ \ref{cor:maxflags} and Lemma \ref{lem:GL(U)} 
imply \cite[Prop.~5.1]{BMRT:relative}.
So Corollary~\ref{cor:maxflags} may be viewed as a generalization of 
the special case treated in 
\cite[Prop.~5.1]{BMRT:relative}.

Corollary~\ref{cor:classicalK} considers situations when $K$ acts irreducibly on $V$. We close this section with a characterization of relative $\GL(V)$-complete reducibility in case $V$ decomposes as a direct sum of $K$-modules. We first note a consequence of Lemma~\ref{lem:3.3} which characterizes relative $G$-complete reducibility when $G$ and $K$ admit compatible direct-product structures.

\begin{cor}
	\label{cor:produkt}
	For $i = 1,2$, let $K_i \subseteq  G_i$ be reductive groups,
	$G:=G_1\times G_2$ and $K:=K_1\times K_2$. Let $H\subseteq G$ be a subgroup.
	Then $H$ is relatively $G$-completely reducible with respect to $K$ if and only if $H$ is relatively $G$-completely reducible with respect to $K_i$ for $i=1,2$.
\end{cor}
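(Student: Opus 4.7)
The plan is to leverage the compatibility of the R-parabolic and R-Levi constructions with direct products, and to reformulate both flavours of relative complete reducibility using Lemma~\ref{lem:3.3}\ref{lem:3.3-ii}. The starting observation is that $Y(K) = Y(K_1) \times Y(K_2)$, and for any $\lambda = (\lambda_1,\lambda_2) \in Y(K)$ the limits defining $P_\lambda$ and $L_\lambda$ decompose coordinatewise, giving
\[ P_\lambda = P_{\lambda_1}(G_1) \times P_{\lambda_2}(G_2), \qquad L_\lambda = L_{\lambda_1}(G_1) \times L_{\lambda_2}(G_2), \]
with the analogous identities for $K$ in place of $G$. Since the unipotent radical of a direct product is the product of the unipotent radicals, we also have $R_u(P_\lambda(K)) = R_u(P_{\lambda_1}(K_1)) \times R_u(P_{\lambda_2}(K_2))$. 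These identities are formal consequences of the definitions, and should work without any connectedness hypothesis.

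For the forward implication, suppose $H$ is relatively $G$-cr with respect to $K$; to establish relative $G$-cr with respect to $K_1$ (the $K_2$ case being symmetric), I take $\lambda_1 \in Y(K_1)$ with $H \subseteq P_{\lambda_1}$, regarded as the cocharacter $(\lambda_1, 0) \in Y(K)$ where $0$ is the trivial cocharacter of $K_2$. Lemma~\ref{lem:3.3}\ref{lem:3.3-ii} applied to $K$ then produces $u \in R_u(P_{(\lambda_1,0)}(K))$ with $H \subseteq L_{u\cdot(\lambda_1,0)}$. But $R_u(P_{(\lambda_1,0)}(K)) = R_u(P_{\lambda_1}(K_1)) \times R_u(K_2) = R_u(P_{\lambda_1}(K_1))$, using that $K_2$ is reductive, so $u \in R_u(P_{\lambda_1}(K_1))$ and $L_{u \cdot (\lambda_1,0)} = L_{u\cdot\lambda_1}(G_1) \times G_2 = L_{u \cdot \lambda_1}$ in $G$. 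Lemma~\ref{lem:3.3}\ref{lem:3.3-ii} in the other direction delivers the desired conclusion.

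For the reverse implication, assume $H$ is relatively $G$-cr with respect to both $K_1$ and $K_2$, and take any $\lambda = (\lambda_1, \lambda_2) \in Y(K)$ with $H \subseteq P_\lambda$. Since $H \subseteq P_\lambda \subseteq P_{\lambda_1}$ (viewed in $G$), the hypothesis on $K_1$ combined with Lemma~\ref{lem:3.3}\ref{lem:3.3-ii} supplies $u_1 \in R_u(P_{\lambda_1}(K_1))$ with $H \subseteq L_{u_1 \cdot \lambda_1}(G_1) \times G_2$; symmetrically there is $u_2 \in R_u(P_{\lambda_2}(K_2))$ with $H \subseteq G_1 \times L_{u_2 \cdot \lambda_2}(G_2)$. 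Intersecting and setting $u := (u_1, u_2)$, which lies in $R_u(P_\lambda(K))$ by the product decomposition, we obtain $H \subseteq L_{u_1 \cdot \lambda_1}(G_1) \times L_{u_2 \cdot \lambda_2}(G_2) = L_{u \cdot \lambda}$, and a final application of Lemma~\ref{lem:3.3}\ref{lem:3.3-ii} completes the proof.

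The main obstacle, such as it is, lies in the initial verification of the product decompositions of $P_\lambda$, $L_\lambda$ and $R_u(P_\lambda)$ for general (possibly disconnected) reductive groups. Once those are in hand, everything else is routine bookkeeping around Lemma~\ref{lem:3.3}\ref{lem:3.3-ii}, and in particular no use is made of the more delicate Lemma~\ref{lem:IntersectionParabolic} or of the structure of $\MK$ and $\PK'$.
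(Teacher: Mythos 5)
Your argument is correct and coincides with the paper's own proof: both pass through the product decompositions $P_\lambda = P_{\lambda_1}(G_1)\times P_{\lambda_2}(G_2)$, $L_\lambda = L_{\lambda_1}(G_1)\times L_{\lambda_2}(G_2)$, $R_u(P_\lambda(K)) = R_u(P_{\lambda_1}(K_1))\times R_u(P_{\lambda_2}(K_2))$, and then reformulate both directions via Lemma~\ref{lem:3.3}\ref{lem:3.3-ii}, embedding $\lambda_1$ as $(\lambda_1,0)\in Y(K)$ for the forward implication and intersecting the two Levi factors for the converse. The only cosmetic difference is that the paper invokes the proof of \cite[Lem.~2.12]{BMR} for the product decomposition where you derive it directly from the limit definitions, but the substance is identical.
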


\begin{proof}
Let $\lambda\in Y(K_1)$ such that $H \le P_\lambda$.
By the proof of \cite[Lem.~2.12]{BMR}, the parabolic subgroups of $G$  
arising from cocharacters of $K$ have the form $P_{\lambda_1}\times P_{\lambda_2}$ with 
$\lambda_i\in Y(K_i)$ for $i=1,2$, since $G=G_1\times G_2$ and $K=K_1\times K_2$.
Hence $P_\lambda=P_\lambda(G_1)\times G_2$.
Since $H$ is relatively $G$-completely reducible with respect to $K$, there
exists a $u=(u_1,u_2)\in R_u(P_\lambda(K))$ such that
$H \le L_{u\cdot \lambda}=L_{u_1\cdot \lambda}(G_1)\times G_2$,
by Lemma \ref{lem:3.3}\ref{lem:3.3-ii}.
Therefore, $u_1\in R_u(P_\lambda(K_1))$. It follows that $H$ is relatively $G$-cr
with respect to $K_1$, by Lemma \ref{lem:3.3}\ref{lem:3.3-ii}.
The proof for $K_2$ is analogous.
	
For the reverse implication let $\lambda = (\lambda_1,\lambda_2)\in Y(K)=Y(K_1)\times Y(K_2)$ such that $H \le P_{\lambda_1}\times P_{\lambda_2}$.
Since $H$ is relatively $G$-completely reducible with respect to $K_i$ for
$i=1,2$, there exits a $u_i\in R_u(P_{\lambda_i}(K_i))$ such that
$H \le L_{u_1\cdot\lambda_1}(G_1)\times G_2$
resp.~$H \le G_1\times L_{u_2\cdot\lambda_2}(G_2)$.
Therefore, we obtain 
$$H \le (L_{u_1\cdot\lambda_1}(G_1)\times G_2)\cap (G_1\times L_{u_2\cdot\lambda_2}(G_2)) =L_{u\cdot\lambda}$$ 
for $u=(u_1,u_2)\in R_u(P_\lambda(K))$.
Once again, by Lemma \ref{lem:3.3}\ref{lem:3.3-ii}, $H$ is relatively $G$-cr with respect to $K$.
\end{proof}

The following result is now immediate from Corollary~\ref{cor:produkt} and \cite[Cor.~3.6]{BMRT:relative}.

\begin{cor} \label{cor:product}
Let $G = \GL(V)$ and suppose that both $H$ and $K$ preserve a direct-sum decomposition $V = \bigoplus_{i = 1}^{n} V_{i}$. Suppose also that $K = K_1 \times \cdots \times K_n$ where $K_i \le \GL(V_i)$ for each $i$. Then $H$ is relatively $G$-completely reducible with respect to $K$ if and only if $H$ is relatively $G$-completely reducible with respect to $K_i$ for all $i$.
\end{cor}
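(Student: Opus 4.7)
The plan is to reduce to the two-factor setting of Corollary~\ref{cor:produkt} by working inside a suitable R-Levi subgroup of $G$. Since both $H$ and $K$ preserve the decomposition $V = \bigoplus_{i=1}^{n} V_i$, both are contained in the subgroup $L := \GL(V_1) \times \cdots \times \GL(V_n)$, which is itself an R-Levi subgroup of $G = \GL(V)$. Concretely, any cocharacter $\mu \in Y(Z(K)^\circ)$ acting with pairwise distinct weights on the factors $V_i$ satisfies $L = L_\mu$ with $\mu \in Y(K)$, and the same $\mu$ (composed with the projections, or regarded appropriately) also exhibits $L$ as an R-Levi arising from a cocharacter of each $K_i$.

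The key external input is \cite[Cor.~3.6]{BMRT:relative}, which equates relative $G$-complete reducibility and relative $L$-complete reducibility of $H$ with respect to a given reductive subgroup, whenever $L$ is an R-Levi subgroup of $G$ containing $H$ and the subgroup in question, and arising from a cocharacter of that subgroup. Applying this result once to the subgroup $K$ and once to each $K_i$ (noting that each $K_i$ lies in both $L$ and $K$), the problem reduces to showing that $H$ is relatively $L$-cr with respect to $K$ if and only if $H$ is relatively $L$-cr with respect to each $K_i$.

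This remaining equivalence follows from Corollary~\ref{cor:produkt} by a short induction on $n$. The case $n=2$ is precisely Corollary~\ref{cor:produkt} with $G_i = \GL(V_i)$. For the inductive step, decompose $L = L' \times \GL(V_n)$ with $L' = \GL(V_1) \times \cdots \times \GL(V_{n-1})$, and correspondingly $K = K' \times K_n$ with $K' = K_1 \times \cdots \times K_{n-1}$; then Corollary~\ref{cor:produkt} splits relative $L$-cr with respect to $K$ into relative $L$-cr with respect to $K'$ and with respect to $K_n$, and the induction hypothesis further splits the $K'$ piece. No real obstacle arises: the only care needed is in verifying the hypotheses of \cite[Cor.~3.6]{BMRT:relative}, in particular that $L$ is cut out by a cocharacter of the relevant subgroup at each step, and this is arranged by the choice of $\mu \in Y(Z(K)^\circ)$ above.
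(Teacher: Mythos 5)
Your overall strategy—reduce from $G = \GL(V)$ to the R-Levi $L := \GL(V_1)\times\cdots\times\GL(V_n)$ using \cite[Cor.~3.6]{BMRT:relative}, then factor inside $L$ via Corollary~\ref{cor:produkt}—is exactly the route the paper has in mind, and the iteration of Corollary~\ref{cor:produkt} across the factors is essentially fine. However, there is a genuine gap in how you verify the hypotheses of \cite[Cor.~3.6]{BMRT:relative}, and your paraphrase of that result is not correct.

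You assert that $L$ is ``an R-Levi arising from a cocharacter of each $K_i$.'' This is false once $n \ge 3$ (and can fail for $n=2$ as well). A cocharacter $\nu \in Y(K_i)$ has image inside $\GL(V_i)$ and therefore acts trivially on $\bigoplus_{j\ne i} V_j$; consequently the zero-weight space of $\nu$ on $V$ contains $\bigoplus_{j\ne i} V_j$, so $L_\nu \supseteq \GL\bigl(\bigoplus_{j\ne i} V_j\bigr)$, which strictly contains $\prod_{j\ne i}\GL(V_j)$ whenever that sum has more than one nonzero summand. Hence $L_\nu \ne L$, and ``composing $\mu$ with the projections'' does not produce a cocharacter of $K_i$ cutting out $L$. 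A related problem affects even your reduction for $K$ itself: if the $K_i$ do not contain the scalars of $\GL(V_i)$ (e.g.\ $K_i = \SL(V_i)$), then $Z(K)^\circ$ need not contain any cocharacter acting with nonzero, pairwise distinct scalars on the $V_i$, and $L$ need not equal $L_\mu$ for any $\mu \in Y(K)$ at all.

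The resolution is that \cite[Cor.~3.6]{BMRT:relative} does not require the defining cocharacter of $L$ to lie in $Y$ of the relevant reductive subgroup; the relevant hypothesis is that it \emph{centralize} that subgroup (i.e.\ lie in $Y(C_G(K))$, resp.\ $Y(C_G(K_i))$). With the correct statement, take $\mu \in Y(Z(L)^\circ)$ acting with pairwise distinct nonzero weights on the $V_i$; then $L = L_\mu$, and since $K$ and each $K_i$ lie in $L$, this $\mu$ centralizes all of them, so both reductions go through. Note this is a cocharacter of $C_G(K)$, not of $K$—contrast with \cite[Prop.~3.17]{BMRT:relative} (equivalently condition~\ref{minflags-general-iii} of Theorem~\ref{thm:minflags-general}), which does require $\mu \in Y(K)$ and is a genuinely different statement. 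One further small point: in your induction you say the induction hypothesis ``splits the $K'$ piece,'' but the hypothesis as you set it up concerns relative $L'$-complete reducibility, whereas after the first application of Corollary~\ref{cor:produkt} you are dealing with relative $L$-complete reducibility with respect to $K'$; this is better handled by iterating Corollary~\ref{cor:produkt} directly, peeling off one $\GL(V_i)$-factor at a time, rather than by invoking an induction hypothesis at the wrong ambient group.
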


Both implications in Corollary~\ref{cor:product} fail in general, as illustrated by our next example.

\begin{exmp}
Let $G=\GL(k^4)$, $K=\{\diag(t, s, s^{-1}, t^{-1})\mid t,s \in k^*\}$,
and $\{e_1, e_2, e_3, e_4\}$ is the canonical basis for $k^4$.
Set $V_1=\langle e_1, e_2\rangle$ and $V_2=\langle e_3, e_4\rangle$.
Let $K_i$ be the image of the projection from $K$ to $\GL(V_i)$ for $i=1,2$.

Let $H$ be the stabilizer of $U:=\langle e_2, e_4 \rangle$ in $G$ and note that  $(U\subseteq k^4)$ belongs to $\mathcal{F}_K$.
Thus $H$ is a maximal parabolic subgroup of $G$ and corresponds to a cocharacter of $K$, and as such it is not relatively $G$-cr with respect to $K$.
However, $H$ does not correspond to a cocharacter of $K_i$, and by maximality $H$ is not contained in any parabolic subgroup of $G$ correspond to a cocharacter of $K_i$ ($i = 1,2$). Hence $H$ is relatively $G$-irreducible with respect to  
$K_i$ so it is relatively $G$-cr with respect to $K_i$,  for $i=1,2$.

Now let $\widetilde{H}$ be the stabilizer of $\widetilde{U}:=\langle e_1\rangle$ in $G$.
Note that $\widetilde{H}$ is a maximal parabolic subgroup of $G$ and corresponds to a cocharacter of $K_1$, thus it 
is not relatively $G$-cr with respect to $K_1$.
However, since $\widetilde{H}$ does not correspond to a cocharacter of $K$, it is 
relatively $G$-irreducible with respect to $K$, in particular it is 
relatively $G$-cr with respect to $K$.
\end{exmp}

\section{Rationality Questions}
\label{sec:rationality}
In this section $k$ denotes an arbitrary field, $G$ is a reductive $k$-defined group and $K$ is a reductive $k$-defined subgroup of $G$. For a $k$-defined closed subgroup $M$ of $G$, write $Y_{k}(M)$ for the $k$-defined cocharacters of $M$, and let $M(k)$ denote the group of $k$-points of $M$. First, we recall the definition of relative $G$-complete reducibility over $k$ from \cite[Def.~4.1]{BMRT:relative}, and also define the analogue of relative $G$-irreducibility over $k$.

\begin{defn}
Let $H$ be a subgroup of $G$.
We say that \emph{$H$ is relatively $G$-completely reducible over $k$ with respect to $K$} if for every $\lambda\in Y(K)$ such that $P_\lambda$ is $k$-defined and $H$ is contained in $P_\lambda$, there exists $\mu\in Y(K)$ such that $P_\lambda=P_\mu$,  $H$ is contained in $L_\mu$ and $L_\mu$ is $k$-defined. We also say that \emph{$H$ is relatively $G$-irreducible over $k$ with respect to $K$} if $H$ is not contained in any $k$-defined parabolic subgroup $P_{\lambda}$ with $\lambda \in Y(K)$.
\end{defn}

\begin{rem} \label{rem:not-all-k-def-parabs}
By \cite[Lem.~4.8]{BMRT:relative}, a subgroup is relatively $G$-cr over $k$ with respect to $K$ if and only if for every $\lambda \in Y_{k}(K)$ such that $H \le P_\lambda$, there exists $\mu \in Y_{k}(K)$ such that $P_\lambda = P_\mu$ and $H \le L_{\mu}$. By identical arguments to those in the proof of \cite[Lem.~4.8]{BMRT:relative}, a subgroup is relatively $G$-irreducible over $k$ with respect to $K$ if and only if it is not contained in any R-parabolic subgroup $P_{\lambda}$ with $\lambda \in Y_{k}(K)$.
\end{rem}

Analogous to Theorem \ref{thm:richardson}, we have a geometric characterization of relative $G$-complete reducibility over $k$. We recall some definitions \cite[Def.~1.1]{BHMR:cochars}, \cite[Def.~5.4]{BMRT:S-instability}.
\begin{defn} Let $G$ be reductive and $k$-defined.
\begin{enumerate}
	\item Let $G$ act $k$-morphically on an affine $k$-variety $X$, and let $x \in X$. The orbit $G(k)\cdot x$ is called \emph{cocharacter-closed over $k$} if for all $\lambda\in Y_k(G)$ such that $\lim_{a\to 0} \lambda(a)\cdot x$ exists, then this limit lies in the $G(k)$-orbit $G(k) \cdot x$.
	\item A \emph{generic tuple} for a subgroup $H$ of $G$ is an $n$-tuple $(h_1,\ldots,h_n) \in G^{n}$ such that $H$ and $\{h_1,\ldots,h_n\}$ generate the same associative subalgebra of $\Mat_{m \times m}(\overline{k})$, for some embedding $G \to \GL_{m}(\overline{k})$. 
\end{enumerate}
\end{defn}
Note that a generic tuple for $H$ always exists since $\Mat_{m \times m}(\overline{k})$ is a finite-dimensional $\overline{k}$-algebra. The following summarizes part of \cite[Thm.~4.12(iii)]{BMRT:relative}.

\begin{thm}
\label{thm:cochar-realCR}
Let $K$ be a reductive subgroup of $G$, let $H \le G$ and let $\mathbf{h} \in G^{n}$ be a generic tuple for $H$. Then $H$ is relatively $G$-completely reducible over $k$ with respect to $K$ if and only if $K(k) \cdot \mathbf{h}$ is cocharacter-closed over $k$.
\end{thm}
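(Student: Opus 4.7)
The plan hinges on a dictionary between $K$-orbit behaviour of $\mathbf{h}$ in $G^n$ and subgroup-theoretic behaviour of $H$. Using the defining property of a generic tuple---that $\mathbf{h} = (h_1,\ldots,h_n)$ generates the same associative $\overline{k}$-subalgebra of $\Mat_{m \times m}(\overline{k})$ as $H$---one shows for any $\lambda \in Y(K)$ that $\lim_{a\to 0}\lambda(a)\cdot \mathbf{h}$ exists iff $H \le P_\lambda$, that the limit equals $\mathbf{h}$ iff $H \le L_\lambda$, and that more generally the limit equals $c_\lambda(\mathbf{h}) := (c_\lambda(h_1),\ldots,c_\lambda(h_n))$, where $c_\lambda : P_\lambda \to L_\lambda$ is the canonical projection. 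These equivalences rest on the fact that containment of $H$ in an R-parabolic or R-Levi subgroup is detected pointwise on a set of algebra generators, and they recast both sides of the theorem in compatible terms.

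For the forward direction, suppose $H$ is relatively $G$-cr over $k$ with respect to $K$ and pick $\lambda \in Y_k(K)$ such that the limit $\mathbf{h}' := \lim_{a\to 0}\lambda(a)\cdot \mathbf{h}$ exists. By the dictionary, $H \le P_\lambda$, so by hypothesis (combined with Remark~\ref{rem:not-all-k-def-parabs}) there exists $\mu \in Y_k(K)$ with $P_\mu = P_\lambda$ and $H \le L_\mu$. Lemma~\ref{lem:3.3} supplies $u \in R_u(P_\lambda(K))$ with $u L_\lambda u^{-1} = L_\mu$, and since both Levi subgroups are $k$-defined a descent argument (using $H^1(k, R_u(P_\lambda(K))) = 0$) provides such a $u$ in $R_u(P_\lambda(K))(k)$. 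Since $u^{-1}\cdot\mathbf{h} \in L_\lambda^n$ commutes with $\lambda$, a short limit calculation using $\lim_{a\to 0}\lambda(a)u\lambda(a)^{-1} = 1$ gives $\mathbf{h}' = u^{-1}\cdot\mathbf{h} \in K(k)\cdot\mathbf{h}$, establishing cocharacter-closedness.

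For the reverse direction, suppose $K(k)\cdot\mathbf{h}$ is cocharacter-closed over $k$ and let $\lambda \in Y_k(K)$ with $H \le P_\lambda$. The limit $\mathbf{h}' = c_\lambda(\mathbf{h})$ exists and, by cocharacter-closedness, equals $g\cdot\mathbf{h}$ for some $g \in K(k)$. The plan is to factor $g = c u^{-1}$ with $c \in C_K(H)(k)$ and $u \in R_u(P_\lambda(K))(k)$ (this is the main technical point, discussed below). Setting $\mu := u^{-1}\cdot\lambda \in Y_k(K)$, one then has $P_\mu = P_\lambda$ (since $u \in P_\lambda$) and $H \le u L_\lambda u^{-1} = L_\mu$; invoking the equivalent formulation of relative $G$-cr over $k$ from Remark~\ref{rem:not-all-k-def-parabs} completes the argument.

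The main obstacle is the factorisation $g = c u^{-1}$ in the reverse direction. Any solution $g$ of $g\cdot\mathbf{h} = \mathbf{h}'$ is determined only modulo $C_K(\mathbf{h})(k) = C_K(H)(k)$; one must show that within this coset lies a representative in $R_u(P_\lambda(K))(k) \cdot C_K(H)(k)$. Geometrically, this amounts to showing that $R_u(P_\lambda(K))$ acts transitively (over $k$) on the fibre of $K$-elements taking $\mathbf{h}$ to $c_\lambda(\mathbf{h})$, which over $\overline{k}$ follows because both $\mathbf{h}$ and $c_\lambda(\mathbf{h})$ project to the same point in $L_\lambda^n$ under the quotient $P_\lambda^n \to L_\lambda^n$. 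The $k$-rational refinement again relies on the vanishing of $H^1(k, R_u(P_\lambda(K)))$ for $k$-split unipotent groups, a theme that also underlies the rationality machinery of \cite[\S4]{BMRT:relative}; this is where care is most needed, and is the natural pinch-point in translating between the geometric and algebraic formulations.
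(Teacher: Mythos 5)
The paper does not actually prove Theorem~\ref{thm:cochar-realCR}: immediately before stating it, it says ``The following summarizes part of \cite[Thm.~4.12(iii)]{BMRT:relative}'' and simply cites that reference. So you are attempting a from-scratch proof of an externally cited result.

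Your high-level architecture---the generic-tuple dictionary ``$\lim_{a\to 0}\lambda(a)\cdot\mathbf{h}$ exists iff $H\le P_\lambda$, and the limit is $c_\lambda(\mathbf{h})$, with $H\le L_\lambda$ iff the limit is $\mathbf{h}$''---is the right framework and matches what underlies \cite{BMRT:relative}. The forward direction is essentially sound, though one remark: Lemma~\ref{lem:3.3}(i) does not \emph{supply} $u$; it presupposes a $u\in R_u(P_\lambda(K))$ conjugating $L_\lambda(K)$ to $L_\mu(K)$ and upgrades it to conjugate $L_\lambda$ to $L_\mu$. Existence of $u$ comes from the standard conjugacy of R-Levi subgroups of $P_\lambda(K) = P_\mu(K)$, and the $k$-rational upgrade is \cite[Lem.~4.6]{BMRT:relative} (the $H^1$-vanishing you mention). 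That part is fine.

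The genuine gap is in the reverse direction, at exactly the spot you call the ``pinch-point''. You claim that, over $\overline{k}$, the existence of $u\in R_u(P_\lambda(K))$ with $u\cdot\mathbf{h} = c_\lambda(\mathbf{h})$ ``follows because both $\mathbf{h}$ and $c_\lambda(\mathbf{h})$ project to the same point in $L_\lambda^n$ under the quotient $P_\lambda^n\to L_\lambda^n$''. That observation is a tautology (it is the definition of $c_\lambda$) and it does not prove the claim. The fibres of the coordinatewise projection $P_\lambda^n\to L_\lambda^n$ are orbits of $R_u(P_\lambda)^n$ acting \emph{coordinatewise}, not orbits of $R_u(P_\lambda)$ (let alone $R_u(P_\lambda(K))$) acting diagonally by simultaneous conjugation. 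If ``same image in $L_\lambda^n$'' forced ``same diagonal $R_u(P_\lambda(K))$-orbit'', then every $K$-orbit in $G^n$ would automatically be cocharacter-closed, which is false. What you actually need is the nontrivial statement that cocharacter-closedness of $K(k)\cdot\mathbf{h}$ forces the limit $c_\lambda(\mathbf{h})$ to be reached by some element of $R_u(P_\lambda(K))(k)$; this is precisely the rational Hilbert--Mumford-type theorem of \cite{BHMR:cochars} (and is the content behind \cite[\S 4]{BMRT:relative}). The $H^1$-vanishing you invoke handles rational descent only \emph{after} the geometric statement over $\overline{k}$ has been established, and that geometric statement is what your argument skips.

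One further small slip: with $\mu := u^{-1}\cdot\lambda$ one has $L_\mu = u^{-1}L_\lambda u$, not $u L_\lambda u^{-1}$. The intended containment $H\le L_\mu$ still holds (since $u\cdot\mathbf{h} = c_\lambda(\mathbf{h})\in L_\lambda^n$ gives $uHu^{-1}\le L_\lambda$), but the displayed identity should be corrected.
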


We now generalize Theorems~\ref{thm:minflags-general} and \ref{thm:condition-pk-prime} to the rational setting. First, we need a rational analogue of \cite[Prop.~3.17]{BMRT:relative}.

\begin{prop} \label{prop:rational-317}
Let $K$ be a $k$-defined reductive subgroup of $G$, and let $H \le G$.
\begin{enumerate}
	\item Suppose that $H \le L := L_{\lambda}$ where $\lambda \in Y_{k}(K)$. Then $H$ is relatively $G$-completely reducible over $k$ with respect to $K$ if and only if $H$ is relatively $L$-completely reducible over $k$ with respect to $K \cap L$. \label{rational-317-i}
	\item If $L$ is minimal among subgroups of the form $L_{\lambda}$ with $\lambda \in Y_{k}(K)$ containing $H$, then $H$ is relatively $G$-completely reducible over $k$ with respect to $K$ if and only if $H$ is relatively $L$-irreducible over $k$ with respect to $K \cap L$. \label{rational-317-ii}
\end{enumerate}
\end{prop}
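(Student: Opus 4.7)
The plan is to mirror the strategy of the non-rational predecessor \cite[Prop.~3.17]{BMRT:relative} while tracking $k$-rationality throughout. Two tools make this tracking possible: Remark~\ref{rem:not-all-k-def-parabs}, which lets me check relative $G$-complete reducibility (resp.\ relative $G$-irreducibility) over $k$ using only cocharacters in $Y_k(K)$; and the $k$-rational refinement of Lemma~\ref{lem:3.3}, namely \cite[Lem.~4.8]{BMRT:relative}, which allows me to replace a cocharacter by an $R_u(P_\lambda(K))(k)$-conjugate over $k$. I also note that $K \cap L = C_K(\lambda(k^\ast))$ is $k$-defined and reductive, so the hypotheses of the theorem make sense on both sides.

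The central geometric observation for (i) is that every $\mu \in Y(K \cap L)$ has image commuting with $\lambda(k^\ast)$, since $L = L_\lambda$ is exactly $C_G(\lambda(k^\ast))$. This yields the identifications $P_\mu(L) = P_\mu \cap L$ and $L_\mu(L) = L_\mu \cap L$, setting up a bijection between the relevant R-parabolics. For the forward direction, I take $\mu \in Y_k(K \cap L)$ with $H \le P_\mu(L) \subseteq P_\mu$; the relative $G$-cr hypothesis and the $k$-rational Lemma~\ref{lem:3.3} produce $u \in R_u(P_\mu(K))(k)$ with $H \le L_{u \cdot \mu}$. The subtle point is that I need $u \cdot \mu$ to lie in $Y_k(K \cap L)$ so that the resulting Levi is an R-Levi of $L$ corresponding to a cocharacter of $K \cap L$ over $k$. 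To achieve this I decompose $R_u(P_\mu(K))$ into $\lambda$-weight subgroups, whose zero-weight part is precisely $R_u(P_\mu(K \cap L))$; since $\lambda$ is $k$-defined this decomposition is $k$-rational, and the higher-weight factors can be absorbed so that a further adjustment over $k$ places $u$ inside $R_u(P_\mu(K \cap L))(k)$. The reverse direction is easier: given $\mu \in Y_k(K)$ with $H \le P_\mu$, I use that $H \le L$ together with the combined cocharacter $\lambda + N\mu$ (for $N$ large, lying in $Y_k(K)$) to reduce to a $K \cap L$-cocharacter, then apply the relative $L$-cr hypothesis and pull the resulting Levi back to $G$ via the identification $L_\nu(L) = L_\nu \cap L$.

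Part (ii) follows formally from (i). The "if" direction is immediate, since relative $L$-irreducibility over $k$ (with respect to $K \cap L$) implies relative $L$-complete reducibility over $k$, which by (i) gives relative $G$-cr over $k$ with respect to $K$. For the "only if" direction, suppose $H$ is relatively $G$-cr over $k$ with respect to $K$. By (i), $H$ is relatively $L$-cr over $k$ with respect to $K \cap L$. If $H$ failed to be relatively $L$-irreducible, then $H \le P_\sigma(L)$ for some $\sigma \in Y_k(K \cap L)$ corresponding to a proper R-parabolic of $L$, and the relative $L$-cr property would put $H$ inside $L_{\sigma'}(L) = L_{\sigma'} \cap L$ for some $\sigma' \in Y_k(K \cap L)$ with $P_{\sigma'}(L) = P_\sigma(L)$. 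For sufficiently large $N$, the cocharacter $\lambda + N\sigma'$ lies in $Y_k(K)$ and satisfies $L_{\lambda + N\sigma'} = L \cap L_{\sigma'}$, which is strictly contained in $L$; this contradicts the minimality of $L$.

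The main obstacle I anticipate is the $\lambda$-weight-space decomposition step in the forward direction of (i), namely showing that the conjugating element $u$ can be chosen $k$-rationally inside the smaller unipotent radical $R_u(P_\mu(K \cap L))(k)$ rather than merely in $R_u(P_\mu(K))(k)$. Once that rationality refinement is in place, the remaining steps are straightforward adaptations of the non-rational arguments in \cite{BMRT:relative}, combined with the identifications $P_\mu(L) = P_\mu \cap L$ and $L_\mu(L) = L_\mu \cap L$.
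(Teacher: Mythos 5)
Your proposal takes a genuinely different route from the paper's, and it contains a real gap that you yourself flag.

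The paper dispatches part~\ref{rational-317-i} in a single sentence: letting $\mathbf{h}\in G^n$ be a generic tuple for $H$, it invokes the geometric characterization of Theorem~\ref{thm:cochar-realCR} and then applies \cite[Thm.~5.4(ii)]{BHMR:cochars} (which relates cocharacter-closure of an orbit over $k$ to cocharacter-closure of the corresponding orbit under the Levi). The $k$-splitness of $\lambda(k^*)$ is exactly what that theorem needs, and all the ``passage to a Levi'' combinatorics is already packaged there. In contrast, you attempt to reprove this passage directly, by hand, inside $Y_k(K)$ using the identifications $P_\mu(L)=P_\mu\cap L$ and $L_\mu(L)=L_\mu\cap L$ together with the rational refinement of Lemma~\ref{lem:3.3}. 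This is legitimate in spirit and mirrors the non-rational \cite[Prop.~3.17]{BMRT:relative}, but it means you are re-deriving (in an ad hoc fashion) the content of a theorem the paper simply cites.

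The gap is precisely the step you single out as ``the main obstacle.'' After producing $u\in R_u(P_\mu(K))(k)$ with $H\le L_{u\cdot\mu}$, you assert that the $\lambda$-weight decomposition of $R_u(P_\mu(K))$ lets you ``absorb the higher-weight factors'' and move $u$ into $R_u(P_\mu(K\cap L))(k)$. As written this is not an argument: the nonzero-weight pieces of $u$ change $L_{u\cdot\mu}$, and there is no a priori reason that discarding them preserves the containment $H\le L_{u\cdot\mu}$. What you actually know is that both $\lambda$ and $u\cdot\mu$ lie in $Y_k(C_K(H))$, so the genuine issue is to conjugate $u\cdot\mu$, by a $k$-point centralizing $H$, into a torus containing $\lambda(k^*)$ --- this is the kind of maneuver the paper carries out in the proof of Theorem~\ref{thm:condition-pk-prime} via conjugacy of maximal $k$-split tori in $C_{P_\nu(K)}(H)$ (\cite[Lem.~2.12]{BHMR:cochars}), and it requires its own justification because $C_K(H)$ need not be reductive or connected. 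The reverse direction of~\ref{rational-317-i} has a parallel unaddressed point: forming $\lambda+N\mu$ presupposes that $\lambda$ and $\mu$ commute, which for an arbitrary $\mu\in Y_k(K)$ with $H\le P_\mu$ again requires a $k$-rational conjugation step, and even once done you get only $P_\nu(L)=P_\mu(L)$, not the required $P_{\tilde\nu}=P_\mu$ in $G$ without further work. Your part~\ref{rational-317-ii} is fine and agrees with the paper: both reduce to~\ref{rational-317-i} plus minimality, you phrasing it as a contradiction via $L_{\lambda+N\sigma'}\subsetneq L$ and the paper phrasing it directly as ``any $k$-defined cocharacter of $K\cap L$ centralized by $H$ is central in $L$.''
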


\begin{proof} \vspace{-\topsep}
	Noting that the image of $\lambda$ is a $k$-split torus, part \ref{rational-317-i} follows directly from \cite[Thm.~5.4(ii)]{BHMR:cochars} applied to the $K$-orbit of $\mathbf{h} \in G^{n}$, where $\mathbf{h} \in G^{n}$ is a generic tuple for $H$. For part \ref{rational-317-ii}, we have shown that $H$ is relatively $L$-cr over $k$ with respect to $K \cap L$. But also, from the minimality of $L$, it follows that any $k$-defined cocharacter of $K \cap L$ centralized by $H$ is central in $L$, hence $H$ is relatively $L$-irreducible over $k$ with respect to $K \cap L$.
\end{proof}

We are now in a position to prove the rational counterparts of Theorems~\ref{thm:minflags-general} and \ref{thm:condition-pk-prime}. Define the following analogues of the sets $\PK$, $\MK$ and $\PK'$:
\begin{align*}
\mathcal{P}_{K,k} &= \left\{ P_{\mu} \mid \mu \in Y_{k}(K) \right\},\\
\mathcal{M}_{K,k} &= \left\{ \text{inclusion-maximal members of } \mathcal{P}_{K,k} \right\},\\
\mathcal{P}_{K,k}' &= \left\{ P \in \mathcal{P}_{K,k} \mid P = \bigcap \{Q \in \mathcal{M}_{K,k} \mid P \le Q\} \right\}.
\end{align*}
With these definitions, the obvious analogue of Lemma~\ref{lem:intersection} holds for connected $G$. In the proof, one needs to work with the relative root system of $G$ with respect to a maximal $k$-split torus \cite[V.21]{Borel91}, \cite[\S 15, 16]{spr2}, but otherwise the argument goes through \emph{mutatis mutandis}. The conclusion of Lemma~\ref{lem:IntersectionParabolic} also holds when considering only $k$-defined cocharacters. Since $\mathcal{M}_{K,k} = \MK \cap \mathcal{P}_{K,k}$ and $\mathcal{P}_{K,k}' = \PK' \cap \mathcal{P}_{K,k}$, Lemma~\ref{lem:correct-sets} immediately implies its own rational analogue. Finally, \cite[Lem.~4.6]{BMRT:relative} shows that each time we use Lemma~\ref{lem:3.3} to conjugate a subgroup or cocharacter, the conjugating element can be taken to be a $k$-point.

The following are now the rational versions of Theorem~\ref{thm:minflags-general} and \ref{thm:condition-pk-prime}.

\begin{thm}
	\label{thm:minflags-rational}
	Let $K \le G$ be reductive $k$-defined algebraic groups with $G$ connected, and let $H$ be a subgroup of $G$. Then the following are equivalent.
	\begin{enumerate}
		\item $H$ is relatively $G$-completely reducible over $k$ with respect to $K$. \label{minflags-rational-i}
		\item Every member of $\mathcal{M}_{K,k}$ containing $H$ has an opposite in $\mathcal{M}_{K,k}$ containing $H$. \label{minflags-rational-ii}
		\item There is an R-Levi subgroup $L_{\mu}$ with $\mu \in Y_{k}(K)$, such that $H \le L_{\mu}$ and $H$ is relatively $L_{\mu}$-irreducible over $k$ with respect to $K \cap L_{\mu}$. \label{minflags-rational-iii}
	\end{enumerate}
\end{thm}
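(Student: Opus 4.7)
My plan is to mirror the proof of Theorem~\ref{thm:minflags-general}, substituting each ingredient by its rational counterpart; all of these are already available. Specifically, Proposition~\ref{prop:rational-317} plays the role of \cite[Prop.~3.17]{BMRT:relative}, the analogues of Lemmas~\ref{lem:intersection}, \ref{lem:IntersectionParabolic} and \ref{lem:correct-sets} in the rational setting are discussed immediately before the theorem statement, and \cite[Lem.~4.6]{BMRT:relative} guarantees that whenever Lemma~\ref{lem:3.3} is applied the conjugating unipotent element may be taken in $R_{u}(P_{\lambda}(K))(k)$.

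I would first dispose of the equivalence \ref{minflags-rational-i} $\Leftrightarrow$ \ref{minflags-rational-iii}, which is immediate from Proposition~\ref{prop:rational-317}: given \ref{minflags-rational-i}, take $L_{\mu}$ minimal among $k$-defined R-Levi subgroups of the form $L_{\lambda}$, $\lambda \in Y_{k}(K)$, containing $H$, and appeal to Proposition~\ref{prop:rational-317}\ref{rational-317-ii}; conversely, Proposition~\ref{prop:rational-317}\ref{rational-317-i} transfers relative $L_{\mu}$-complete reducibility of $H$ over $k$ back to relative $G$-complete reducibility over $k$. For \ref{minflags-rational-i} $\Rightarrow$ \ref{minflags-rational-ii}, suppose $P \in \mathcal{M}_{K,k}$ contains $H$; by hypothesis there exists $\mu \in Y_{k}(K)$ with $P_{\mu} = P$ and $H \le L_{\mu}$, so that $P_{-\mu} \in \mathcal{P}_{K,k}$ is opposite to $P$ and contains $H$. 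Since $\mathcal{M}_{K,k} = \MK \cap \mathcal{P}_{K,k}$, the rational version of Lemma~\ref{lem:correct-sets} places $P_{-\mu}$ in $\mathcal{M}_{K,k}$.

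The substantive direction is \ref{minflags-rational-ii} $\Rightarrow$ \ref{minflags-rational-i}. For this I would first establish the rational counterpart of Theorem~\ref{thm:condition-pk-prime}: each member of $\mathcal{M}_{K,k}$ containing $H$ admits an opposite in $\mathcal{M}_{K,k}$ containing $H$ if and only if each member of $\mathcal{P}_{K,k}'$ containing $H$ admits an opposite in $\mathcal{P}_{K,k}'$ containing $H$. The argument copies the proof of Theorem~\ref{thm:condition-pk-prime} verbatim, proceeding by induction on the minimal $m$ for which a given $P \in \mathcal{P}_{K,k}'$ is an intersection of $m$ elements of $\mathcal{M}_{K,k}$; the only change is that each cocharacter produced at every step must lie in $Y_{k}(K)$, which is ensured by the rational version of Lemma~\ref{lem:IntersectionParabolic} together with \cite[Lem.~4.6]{BMRT:relative} at each invocation of Lemma~\ref{lem:3.3}. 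Having this in hand, the rational analogue of Lemma~\ref{lem:intersection}\ref{lem:intersection-ii} gives $\mathcal{P}_{K,k} = \mathcal{P}_{K,k}'$ (as $G$ is connected), so \ref{minflags-rational-ii} implies that every $P \in \mathcal{P}_{K,k}$ containing $H$ has an opposite in $\mathcal{P}_{K,k}$ containing $H$, which is precisely \ref{minflags-rational-i}.

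The main obstacle I foresee is the rational analogue of Lemma~\ref{lem:intersection}\ref{lem:intersection-ii}, since this is the only place where the combinatorics of the root system enters essentially. One must recast the standard-parabolic argument of Lemma~\ref{lem:intersection} in terms of the relative root system of $G$ with respect to a maximal $k$-split torus of $K$ sitting inside a maximal $k$-split torus of $G$, using the classification of $k$-defined parabolic subgroups by subsets of relative simple roots as in \cite[V.21]{Borel91} or \cite[\S 15, 16]{spr2}. Once this is granted, the remainder is formal bookkeeping: every cocharacter produced in the argument arises from the input data via positive-integer linear combinations and conjugations by $k$-rational unipotent elements, and therefore automatically remains in $Y_{k}(K)$.
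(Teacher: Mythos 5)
Your proposal follows the paper's own route step by step: $\textrm{(i)} \Leftrightarrow \textrm{(iii)}$ via Proposition~\ref{prop:rational-317}, $\textrm{(i)} \Rightarrow \textrm{(ii)}$ via the rational analogue of Lemma~\ref{lem:correct-sets}, and $\textrm{(ii)} \Rightarrow \textrm{(i)}$ via the rational analogue of Theorem~\ref{thm:condition-pk-prime} together with the rational analogue of Lemma~\ref{lem:intersection}\ref{lem:intersection-ii} (with the relative-root-system argument) to get $\mathcal{P}_{K,k} = \mathcal{P}_{K,k}'$. One point in your bookkeeping is not quite right, though. You assert that in transposing the proof of Theorem~\ref{thm:condition-pk-prime} every cocharacter is obtained from the input data only by positive-integer combinations and by conjugation with $k$-rational \emph{unipotent} elements (via \cite[Lem.~4.6]{BMRT:relative}). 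But there is one step that is of a different nature: having produced $\tau \in Y(C_{P_\nu(K)}(H))$, one conjugates $\tau$ by an element of $C_{P_\nu(K)}(H)$ to a cocharacter $\rho$ commuting with $\lambda_1$, so that $\rho$, $\lambda_1$ and $\nu$ land in a common maximal torus. This conjugating element is not unipotent and is not produced by Lemma~\ref{lem:3.3}; rather, one needs to know that two maximal $k$-split tori of $C_{P_\nu(K)}(H)$ are conjugate by a $k$-point, which the paper supplies by invoking \cite[Lem.~2.12]{BHMR:cochars}. Without that, you cannot conclude $\rho \in Y_k(K)$. This is the one subtle point the paper explicitly singles out, and your proposal glosses over it; everything else you wrote matches the paper's argument.
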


\begin{thm} \label{thm:condition-pk-prime-rational}
	Let $K \le G$ be reductive $k$-defined algebraic groups, and let $H$ be a subgroup of $G$. Then each member of $\mathcal{M}_{K,k}$ containing $H$ has an opposite in $\mathcal{M}_{K,k}$ containing $H$ if and only if each member of $\mathcal{P}_{K,k}'$ containing $H$ has an opposite in $\mathcal{P}_{K,k}'$ containing $H$.
\end{thm}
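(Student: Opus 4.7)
My plan is to imitate the proof of Theorem~\ref{thm:condition-pk-prime} line-by-line, substituting the rational analogues of Lemmas~\ref{lem:3.3}, \ref{lem:IntersectionParabolic}, and \ref{lem:correct-sets} flagged in the paragraph preceding the theorem. The forward direction is immediate: since $\mathcal{M}_{K,k} \subseteq \mathcal{P}_{K,k}'$ by definition, any $P \in \mathcal{M}_{K,k}$ containing $H$ admits by hypothesis an opposite $Q \in \mathcal{P}_{K,k}'$ containing $H$, and the rational form of Lemma~\ref{lem:correct-sets} (obtained by intersecting its original conclusion with $\mathcal{P}_{K,k}$) places $Q$ in $\mathcal{M}_{K,k}$.

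For the reverse direction I would induct on the minimal $m$ such that $P \in \mathcal{P}_{K,k}'$ containing $H$ can be written as $P = \bigcap_{i=1}^{m} P_i$ with each $P_i \in \mathcal{M}_{K,k}$; the base case $m = 1$ is the hypothesis. In the inductive step, I fix such a minimal decomposition and invoke the $k$-rational version of Lemma~\ref{lem:3.3} (valid thanks to \cite[Lem.~4.6]{BMRT:relative}) to choose a common maximal $k$-split torus $T$ of $K$ inside $P$ together with cocharacters $\lambda, \lambda_i \in Y_k(T)$ satisfying $P = P_\lambda$ and $P_i = P_{\lambda_i}$. Using the $k$-rational version of Lemma~\ref{lem:IntersectionParabolic} I would then collapse $\bigcap_{i \ge 2} P_{\lambda_i}$ into $P_\nu$ for some positive combination $\nu = \sum_{i \ge 2} n_i \lambda_i \in Y_k(T)$, and rewrite $\lambda = n_1 \lambda_1 + n_\nu \nu$. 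After a $k$-point conjugation of $H$ placing $H$ inside $L_{\lambda_1}$, the induction hypothesis applied to $P_\nu$ yields $P_\sigma \in \mathcal{P}_{K,k}'$ opposite to $P_\nu$ with $H \le L_\sigma$ and $\sigma \in Y_k(K)$. A further $k$-rational adjustment (again via \cite[Lem.~4.6]{BMRT:relative}) aligns $\sigma$ with a cocharacter commuting with $\lambda_1$ in a common maximal $k$-split torus $S$ of $C_K(H)$, after which the opposite $P_{-\lambda} = P_{-\lambda_1} \cap P_{-\nu}$ in $\mathcal{P}_{K,k}'$ emerges exactly as in the absolute proof.

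I expect the main obstacle to be the bookkeeping involved in confirming $k$-rationality at each of these steps: every appearance of a torus, cocharacter, or conjugating element in the absolute argument must be tracked to ensure it can be chosen over $k$. The excerpt has, however, already done the heavy lifting — the three cited rational substitutes are explicitly asserted immediately before the theorem statement — so the task reduces to checking that the absolute argument never makes a choice unavailable over $k$, and with the indicated tools in hand this verification is routine.
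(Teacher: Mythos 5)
Your overall strategy --- imitating the absolute proof of Theorem~\ref{thm:condition-pk-prime} with the rational substitutes for Lemmas~\ref{lem:3.3}, \ref{lem:IntersectionParabolic}, and \ref{lem:correct-sets} --- is exactly the route the paper takes, and the forward direction and inductive skeleton of the reverse direction are set up correctly. However, you close by asserting that the three rational substitutes ``explicitly asserted immediately before the theorem statement'' suffice and that the remaining verification is ``routine,'' and this is where a genuine gap lies. There is precisely one step in the absolute argument that those substitutes do not cover: conjugating the cocharacter $\tau$ to $\rho$ by an element of $C_{P_{\nu}(K)}(H)$ so that $\rho$ commutes with $\lambda_1$. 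You do notice this step (``a further $k$-rational adjustment \ldots aligns $\sigma$ with a cocharacter commuting with $\lambda_1$''), but you attribute it to \cite[Lem.~4.6]{BMRT:relative}, which only controls $k$-rationality of the \emph{unipotent} conjugating elements $u \in R_u(P_\lambda(K))$ arising from Lemma~\ref{lem:3.3}. The conjugation in question is by an element of the centralizer $C_{P_{\nu}(K)}(H)$, needed to put two cocharacters into a common maximal $k$-split torus, and Lemma 4.6 says nothing about that. What is actually required is that this conjugating element can be chosen to lie in $C_{P_{\nu}(K)}(H)(k)$, which follows from the conjugacy over $k$ of maximal $k$-split tori, \cite[Lem.~2.12]{BHMR:cochars}. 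Without naming this extra ingredient your proof has an unjustified step, and treating the verification as routine misses the one point the paper itself flags as subtle.
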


\begin{rem}
	By Remark~\ref{rem:not-all-k-def-parabs}, we could equally well take $\mathcal{P}_{K,k}$ to be the set of $k$-defined R-parabolic subgroups of the form $P_{\lambda}$ with $\lambda \in Y(K)$ and $\mathcal{M}_{K,k}$ to be the set of maximal members of this $\mathcal{P}_{K,k}$. However, working with R-parabolic and R-Levi subgroups corresponding to elements of $Y_{k}(K)$ allows our proofs to be more naturally generalized.
\end{rem}

\begin{proof}[Proof of Theorems~\ref{thm:condition-pk-prime-rational} and \ref{thm:minflags-rational}]
The rational analogue of Lemma~\ref{lem:correct-sets} gives one direction of Theorem~\ref{thm:condition-pk-prime-rational}. For the reverse direction, the rational analogues of Lemmas~\ref{lem:IntersectionParabolic} and \ref{lem:3.3} are almost sufficient for the proof of Theorem~\ref{thm:condition-pk-prime} to go through \emph{mutatis mutandis}. The only subtle point is when conjugating a cocharacter by an element of `$C_{P_{\nu}(K)}(H)$'; we need the conjugating element to lie in $C_{P_{\nu}(K)}(H)(k)$. That we can guarantee this follows from the conjugacy of maximal $k$-split tori in \cite[Lem.~2.12]{BHMR:cochars}.

Thus Theorem~\ref{thm:condition-pk-prime-rational} holds. In Theorem~\ref{thm:minflags-rational}, the equivalence of conditions \ref{minflags-rational-i} and \ref{minflags-rational-iii} follows from Proposition~\ref{prop:rational-317}. The implication \ref{minflags-rational-i} $\Rightarrow$ \ref{minflags-rational-ii} follows from the rational analogue of Lemma~\ref{lem:correct-sets}. As in the algebraically closed case, these implications do not use the fact that $G$ is connected. Finally, if $G$ is connected then the rational version of Lemma~\ref{lem:intersection} tells us that $\mathcal{P}_{K,k} = \mathcal{P}_{K,k}'$, and so the implication \ref{minflags-rational-ii} $\Rightarrow$ \ref{minflags-rational-i} follows from Theorem~\ref{thm:condition-pk-prime-rational}.
\end{proof}

\appendix

\section{Extended example: \texorpdfstring{$G_2$ in $\GL_7$}{G2 in GL7}} \label{sec:appendix}

We close by considering a non-trivial example of Corollary~\ref{cor:minflags-glv} (and hence Theorem~\ref{thm:minflags-general}) when $V$ is a faithful irreducible module for a simple algebraic group of exceptional type.

Fix the algebraically closed field $k$ and let $K$ be a simple algebraic group of type $G_2$ over $k$. Then $K$ can be realized as the group of invertible linear transformations of a $7$-dimensional $k$-vector space $V$ which preserve an alternating trilinear form and associated bilinear form (or quadratic form if $k$ has characteristic $2$) \cite[\S 3]{asch}. 

By \cite[Thms.~1,3]{asch}, the R-parabolic subgroups $P_{\lambda}(K)$ of $K$, with $\lambda \in Y(K)$, are precisely the stabilizers in $K$ of \emph{doubly singular} subspaces of $V$ of dimension $1$ or $2$, where \emph{doubly singular} means singular with respect to both the trilinear form and the bilinear (or quadratic) form. By \cite[Thm.~2]{asch}, $K$ is transitive on such subspaces of each dimension.

Now let $G = \GL(V)$. 
Elementary calculations with high weights show that a given maximal torus of $K$ is $G$-conjugate to the subtorus
\[
S =\{\diag(s,t,st^{-1}, 1, s^{-1} t, t^{-1}, s^{-1}) \mid s,t \in k^{*} \}
\]
of $G$, with respect to an appropriate basis of $V$. It then follows quickly that flags of subspaces of $V$ corresponding to a cocharacter $\lambda \in Y(S)$ involve intermediate subspaces of dimensions $\{2,5\}$, $\{1,3,4,6\}$ and $\{1,2,3,4,5,6\}$. In the first two cases, $P_{\lambda}(K)$ is a maximal parabolic subgroup of $K$ and $P_{\lambda} \in \MK$. In the third case $P_{\lambda}(K)$ and $P_{\lambda}$ are respectively Borel subgroups of $K$ and of $G$. The flags of type $\{2,5\}$ consist of doubly singular subspaces of $V$ and their annihilators, and $K$ is transitive on these. By \cite[\S\S 4.2]{asch} the flags of type $\{1,3,4,6\}$ can be described as those of the form $U \sse \Delta(U) \sse \Delta(U)^{\perp} \sse U^{\perp}$, where $U = \left<x\right>$ is $1$-dimensional and doubly singular and the $3$-dimensional subspace $\Delta(U)$ is defined as the radical of the bilinear form $(u,v) = f(x,u,v)$ with $f$ the $K$-stable trilinear form. Since $K$ is transitive on the subspaces $U$ it is also transitive on these flags.

Explicitly writing out condition \ref{minflags-general-ii} of Theorem~\ref{thm:minflags-general} yields the following criteria for relative complete reducibility in this scenario.
\begin{thm} \label{thm:g2}
With the above notation, a subgroup $H$ of $G = \GL(V)$ is relatively $G$-completely reducible with respect to the subgroup $K$ of type $G_{2}$ if and only if the following two conditions hold.
\begin{enumerate}
	\item \label{g2-i}If $H$ stabilizes a flag $U \sse U^{\perp}$ where $U$ is $2$-dimensional and doubly singular, then $H$ also stabilizes a flag $W \sse W^{\perp}$ where $W$ is $2$-dimensional and doubly singular, and $V = U \oplus W^{\perp} = W \oplus U^{\perp}$.
	\item \label{g2-ii}If $H$ stabilizes a flag $U \sse \Delta(U) \sse \Delta(U)^{\perp} \sse U^{\perp}$ where $U$ is $1$-dimensional and doubly singular, then $H$ stabilizes another such flag $W \sse \Delta(W) \sse \Delta(W)^{\perp} \sse W^{\perp}$ with
	\[ V = U \oplus W^{\perp} = W \oplus U^{\perp} = \Delta(U) \oplus \Delta(W)^{\perp} = \Delta(W) \oplus \Delta(U)^{\perp}. \]
\end{enumerate}
\end{thm}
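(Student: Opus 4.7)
The strategy is to apply Theorem~\ref{thm:minflags-general}, specifically the equivalence of parts \ref{minflags-general-i} and \ref{minflags-general-ii}, so that it suffices to translate the condition ``every member of $\MK$ containing $H$ has an opposite in $\MK$ containing $H$'' into the flag-theoretic conditions \ref{g2-i} and \ref{g2-ii} of the theorem.

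By the discussion preceding the theorem, any $P \in \MK$ is the $\GL(V)$-stabilizer of either (a) a flag $U \sse U^{\perp}$ with $U$ a 2-dimensional doubly singular subspace, or (b) a flag $U \sse \Delta(U) \sse \Delta(U)^{\perp} \sse U^{\perp}$ with $U$ a 1-dimensional doubly singular subspace; the $K$-transitivity results cited from \cite{asch} ensure that, conversely, every such flag arises from some $P \in \MK$. Since opposite R-parabolics share Levi subgroup structure and hence the dimension pattern of the stabilized flags, an opposite of a type-(a) (resp.\ type-(b)) R-parabolic is again of type (a) (resp.\ (b)). It therefore suffices to check that conditions \ref{g2-i} and \ref{g2-ii} precisely characterize the existence of $H$-stable opposites of the two respective types.

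To do so I would work with a cocharacter $\lambda \in Y(K)$ that, after $K$-conjugation, lies in the explicit torus $S$ preceding the theorem, so that $V$ decomposes as a direct sum of $\lambda$-weight spaces. In type (a), $\lambda$ yields three nonzero weight spaces $V_{+}, V_{0}, V_{-}$ of dimensions $2,3,2$, with $U = V_{+}$ and $U^{\perp} = V_{+} \oplus V_{0}$; the opposite $P_{-\lambda}$ is the stabilizer of the flag $V_{-} \sse V_{-} \oplus V_{0}$, so that $W = V_{-}$, $W^{\perp} = V_{-} \oplus V_{0}$, and the transversalities $V = U \oplus W^{\perp} = W \oplus U^{\perp}$ are both simply the decomposition $V = V_{+} \oplus V_{0} \oplus V_{-}$. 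In type (b), $\lambda$ produces five nonzero weight spaces of dimensions $1,2,1,2,1$; identifying $U, \Delta(U), \Delta(U)^{\perp}, U^{\perp}$ with the ascending partial sums and $W, \Delta(W), \Delta(W)^{\perp}, W^{\perp}$ with the descending ones shows that the four direct sum conditions in \ref{g2-ii} correspond exactly to the four partial-sum decompositions of $V$ into ``upper'' and ``lower'' pieces.

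The main obstacle is the converse direction: starting from an $H$-stable pair of flags satisfying the direct sum conditions, one must exhibit a cocharacter $\lambda \in Y(K)$ whose $\lambda$- and $(-\lambda)$-weight flags realize them. For this I would use the $K$-transitivity on doubly singular subspaces together with the fact that in each case the pair of flags determines a canonical weight decomposition of $V$ whose associated one- or two-dimensional torus must preserve both the $K$-invariant bilinear (or quadratic) form and the alternating trilinear form. The double singularity of the subspaces plus the dimension constraints should make this invariance check essentially automatic, but it is the step that requires the most care.
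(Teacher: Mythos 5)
Your overall strategy---apply the equivalence of \ref{minflags-general-i} and \ref{minflags-general-ii} in Theorem~\ref{thm:minflags-general}, then translate into flag language using the explicit description of $\MFK$ developed in the paragraphs before the theorem---is exactly what the paper does, and your identification of the two types of flags and the observation that an opposite in $\MK$ of a type-(a) parabolic must again be of type (a) (because opposition preserves the dimension pattern) are both correct. However, the ``main obstacle'' you flag at the end is not actually there, and the way you frame it suggests a misunderstanding of what has to be verified. To check condition~\ref{minflags-general-ii} you do not need to exhibit a single cocharacter $\lambda \in Y(K)$ whose $\lambda$- and $(-\lambda)$-weight flags realize \emph{both} of a given pair of $H$-stable flags. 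Opposition of R-parabolic subgroups is, by definition, the property that their intersection is an R-Levi; it is a relation between the two subgroups, not a condition that requires finding one cocharacter giving rise to both. What you actually need is that each flag separately lies in $\MFK$, and that the two stabilizers are opposite.

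Both of these are already settled by the paper's preamble and the hypotheses of conditions \ref{g2-i} and \ref{g2-ii}. The requirement that $W$ is doubly singular (resp.\ that the flag has the form $W \sse \Delta(W) \sse \Delta(W)^\perp \sse W^\perp$ with $W$ doubly singular) is \emph{exactly} the characterization of membership in $\MFK$ that the preamble extracts from \cite{asch} via transitivity, so the $W$-flag automatically lies in $\MFK$ once it satisfies the stated hypotheses. And in $\GL(V)$ two flag stabilizers of dual dimension type are opposite precisely when the flags are in direct-sum position, which is what the displayed equations in \ref{g2-i} and \ref{g2-ii} say: for instance in case \ref{g2-i}, $V = U \oplus W^\perp = W \oplus U^\perp$ forces $U^\perp \cap W^\perp$ to have dimension $3$ and yields $V = U \oplus W \oplus (U^\perp \cap W^\perp)$, whose stabilizer is a common Levi of $P_U$ and $P_W$, so $P_U \cap P_W$ is a Levi. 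So no further ``invariance check'' on a torus attached to the pair of flags is required: once $\MFK$ is identified, the theorem really is obtained, as the paper says, by ``explicitly writing out'' condition~\ref{minflags-general-ii}.
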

For example, let $U \sse U^{\perp}$ be a flag as in \ref{g2-i} and let $H$ be any Levi subgroup of the corresponding parabolic subgroup of $\GL(V)$. Then $H \cong \GL_{2} \times \GL_{3} \times \GL_{2}$ and $H$ does not stabilize a flag of subspaces of the form in \ref{g2-ii}. Thus $H$ is relatively $G$-cr with respect to $G_{2}$ if and only if the complementary flag stabilized by $H$ also has the form described in \ref{g2-i}.

\begin{rem}
The construction of Chevalley groups of type $G_2$ given in \cite{asch} holds for arbitrary fields $k$, and produces the same description of the maximal parabolic subgroups thereof. Thus for split groups of type $G_2$ over any field $k$, the analogue of Theorem~\ref{thm:g2} holds, characterising relative complete reducibility over $k$. We thank the anonymous referee for pointing this out.
\end{rem}


\end{document}